\documentclass{article}

\usepackage{amsmath}
\usepackage{amssymb}
\usepackage{amsthm}

\usepackage{graphicx}
\usepackage{float}
\usepackage{bm}
\usepackage{enumitem}
\usepackage{mathdots}
\usepackage{booktabs}
\usepackage{rotating}
\usepackage{listings}
\usepackage{hyperref}
\usepackage{xcolor}
\definecolor{Mycolor}{RGB}{10,74,38}
\hypersetup{
  colorlinks=true,
  linkcolor=Mycolor,
  citecolor=Mycolor,
  urlcolor=Mycolor
}

\usepackage[ruled,linesnumbered,longend]{algorithm2e}
\usepackage[a4paper,left=2.8cm,right=2.8cm,top=2.5cm,bottom=2.5cm]{geometry}
\usepackage{fancyhdr}
\usepackage[framemethod=tikz]{mdframed}
\newtheorem{theorem}{Theorem}[section]
\newtheorem{corollary}{Corollary}[section]
\newtheorem{proposition}{Proposition}[section]
\newtheorem{assumption}{Assumption}[section]

\newtheorem{lemma}{Lemma}[section]

\makeatletter 
\@addtoreset{equation}{section}
\makeatother  

\newtheorem{remark}{Remark}[section]

\usepackage{url}
\usepackage{mathtools}
\usepackage{lipsum}
\usepackage{tcolorbox}

\newcommand{\Sph}{\mathbb{S}^2}
\newcommand{\Nzero}{\mathbb{N}_0}
\newcommand{\Pn}{\mathcal P_n}
\newcommand{\Ln}{\mathcal L_n}
\newcommand{\Qn}{\mathcal Q_N}
\newcommand{\Qip}[2]{\langle #1,#2\rangle_{\mathcal Q}}
\newcommand{\ip}[2]{\langle #1,#2\rangle}
\newcommand{\abs}[1]{\left\lvert #1\right\rvert}
\newcommand{\dd}{\,\mathrm d}
\newcommand{\Id}{I}
\newcommand{\Top}{\mathcal T}
\newcommand{\Mop}{\mathcal M}
\newcommand{\Rop}{\mathcal R}
\newcommand{\Bop}{\mathcal B}

\begin{document}

\title{Superconvergence and aliasing saturation in Sloan iteration for spherical integral equations}

\author{ Hao-Ning Wu\footnotemark[2]
       }

\renewcommand{\thefootnote}{\fnsymbol{footnote}}
\footnotetext[2]{School of Mathematical Sciences, Xiamen University, Xiamen, Fujian, 361005, China (hnwu@connect.hku.hk)}

\maketitle

\begin{abstract}
Sloan iteration raises the convergence order of Galerkin and
degenerate-kernel approximations to second-kind integral equations.
After quadrature discretization, these become a discrete Galerkin method and
a product-integration Nystr\"om method, respectively.  How much of this
improvement survives quadrature discretization?  For zonal integral equations
on the sphere, we give a sharp answer by expressing the Sloan error identity
in terms of  spherical harmonics.
In the absence of quadrature, Sloan iteration fully exploits the smoothing of the
integral operator.  Quadrature can destroy this gain by aliasing unresolved
information into low-frequency modes, where further iteration no longer
improves the asymptotic rate.  This yields a unified tail--aliasing
description of these four methods.
For positive-weight polynomially exact quadrature and multipliers of exact
algebraic order, we derive sharp two-sided worst-case bounds that separate the
spectral-tail and aliasing contributions.  Their balance yields a
depth-dependent recovery--saturation threshold: sufficient overintegration
recovers the quadrature-free rate, while below the threshold aliasing
determines the sharp order.  We also extend the analysis beyond polynomial
exactness using Marcinkiewicz--Zygmund stability and Gram-corrected least
squares.  Numerical experiments illustrate both regimes.
\end{abstract}



\section{Introduction}

Degenerate-kernel, projection, and Nystr\"om methods are standard
discretizations of linear integral equations of the second kind; see, e.g.,
\cite{Anselone1971,Atkinson1997Numerical,MR1723850}.  A classical feature of
these methods is that the accuracy of an approximate solution may improve
substantially after one further application of the integral equation.  This
is the basis of Sloan iteration, introduced in
\cite{Sloan1976Degenerate,Sloan1976Improvement}.  For compact integral operators with suitable smoothing properties, 
repeated application of the operator to the approximation error can yield higher-order convergence.
In computation, however, the starting approximation is typically constructed
using quadrature, and the resulting integration error may prevent the
iterates from attaining their quadrature-free convergence rates.  We ask how
accurate the quadrature must be to recover these rates, and what convergence
remains when this accuracy is insufficient.  For zonal operators on the
sphere, we answer these questions sharply after any fixed number of Sloan
steps, provided the spherical harmonic multipliers have exact algebraic order.

\subsection{Sloan iteration and multiplier setting}

Let \(X\) be a Banach space, \(f\in X\), and
\(\Top:X\to X\) be compact with \(\Id-\Top\) invertible.  We consider
\begin{equation}\label{eq:model}
  (\Id-\Top)u=f.
\end{equation}
For an approximation \(v\) to \(u\), we define the Sloan iterates by $\mathcal S_0(v)=v$ and
$\mathcal S_{q+1}(v)=f+\Top\mathcal S_q(v)$ for $q\in\Nzero$.
Subtracting the recursion from the fixed-point identity
\(u=f+\Top u\) and iterating gives
\begin{equation}\label{eq:error-intro}
  u-\mathcal S_q(v)=\Top^q(u-v).
\end{equation}
Thus each Sloan step applies one additional power of the integral operator to
the error.  This identity underlies the classical superconvergence theory for
projection and degenerate-kernel methods; see, e.g.,
\cite{ChenXu1998,GrahamJoeSloan1985,KanekoPadillaXu2001,KanekoXu1996,
Sloan1984FourVariants,SloanThomee1985}.  Repeated and multi-projection variants
have also been studied in, e.g.,
\cite{ChakrabortyAgrawalNelakanti2025,LongNelakanti2007,
MandalNelakanti2019,PorterStirling1993}.

We specialize to \(X=L^2(\Sph)\), where \(\Sph\subset\mathbb R^3\) is the unit
sphere equipped with surface measure \(\omega(\Sph)=4\pi\), and consider the
zonal integral operator
\begin{equation}\label{eq:zonal-intro}
  (\Top v)(x)=\int_{\Sph}\Phi(x\cdot y)v(y)\dd\omega(y),
  \qquad \Phi\in L^1(-1,1).
\end{equation}
Such operators form a standard class in spherical integral equations; see,
e.g., \cite{ColtonKress2013}.  If \(\{Y_{\ell,k}\}\) is a real orthonormal
spherical harmonic basis, the Funk--Hecke formula yields
\begin{equation}\label{eq:funk-hecke-intro}
  \Top Y_{\ell,k}=\mu_\ell Y_{\ell,k},
  \qquad
  \mu_\ell=2\pi\int_{-1}^{1}\Phi(t)P_\ell(t)\dd t.
\end{equation}
Thus \(\Top\) is diagonal in spherical harmonics, and Sloan iteration acts
independently on each harmonic degree through powers of the multiplier
\(\mu_\ell\).  Background on spherical harmonic analysis and approximation theory
can be found in \cite{MR2934227,DaiXu2013}. Throughout the analysis, we impose the following algebraic smoothing and
nonresonance conditions on the multiplier sequence \((\mu_\ell)_{\ell\ge0}\).
\begin{assumption}\label{ass:operator}
There exist constants \(\beta>0\), \(C_{\Top}>0\), and \(\delta>0\) such that for $\ell\in\Nzero$,
\begin{equation}\label{eq:operator-assumption}
  \abs{\mu_\ell}\leq C_{\Top}(1+\ell)^{-\beta},
  \qquad
  \abs{1-\mu_\ell}\geq\delta.\end{equation}
\end{assumption}
The first condition in \eqref{eq:operator-assumption} gives at least \(\beta\) orders of spectral smoothing,
whereas the second guarantees bounded invertibility of \(\Id-\Top\).
Assumption~\ref{ass:operator} suffices for the upper estimates.  To obtain
sharp rates, we also require a matching lower bound on the multipliers.

\begin{assumption}[Exact multiplier order]\label{ass:exact-order}
There exist constants \(c_{\Top}>0\) and \(\ell_0\in\Nzero\) such that
\begin{equation}\label{eq:exact-multiplier-order}
  \abs{\mu_\ell}\geq c_{\Top}(1+\ell)^{-\beta},
  \qquad \ell\geq\ell_0.
\end{equation}
\end{assumption}
Under Assumptions~\ref{ass:operator} and~\ref{ass:exact-order},
\(
  \abs{\mu_\ell}\asymp(1+\ell)^{-\beta}\) as
  $\ell\to\infty$,
so that \(\beta\) is the exact algebraic smoothing order of \(\Top\).
For example, the Laplace single-layer operator
\[
(Sv)(x)=\frac1{4\pi}\int_{\Sph}\frac{v(y)}{|x-y|}\,\dd\omega(y)
\]
satisfies \(SY_{\ell,k}=(2\ell+1)^{-1}Y_{\ell,k}\).  Thus, for
\(\Top=\kappa S\), \(0<\kappa<1\), one has
\(\mu_\ell=\kappa/(2\ell+1)\), and both assumptions hold with \(\beta=1\).

\subsection{Tail, aliasing, and the threshold}
Let \(\mathbb P_n\) denote the space of spherical harmonics of degree at most
\(n\), and let \(\Pn\) be the \(L^2\)-orthogonal projection onto
\(\mathbb P_n\).  With exact projection coefficients, the initial error is
supported only in degrees \(\ell>n\).  Equations~\eqref{eq:error-intro}
and~\eqref{eq:funk-hecke-intro} then show that each Sloan step damps this
spectral tail by an additional factor \(\mu_\ell\); see details in Section \ref{sec:setting}.

Quadrature changes the location of the error.  For a positive rule
$\Qn(g)=\sum_{j=1}^Nw_jg(x_j)$, replacing the exact Fourier--Laplace coefficients of
$\Pn$ gives the hyperinterpolation operator 
\begin{equation}\label{eq:hyperinterpolation}
  \Ln v
  =\sum_{\ell=0}^{n}\sum_{k=1}^{2\ell+1}
    \Qn(vY_{\ell,k})Y_{\ell,k},
\end{equation}
also proposed by Sloan in \cite{sloan1995polynomial}.
If $\Qn$ is exact on $\mathbb P_{2n}$, then $\Ln$ reproduces
$\mathbb P_n$, but generally $\Ln f\ne\Pn f$.  The identity
\begin{equation}\label{eq:split-intro}
  f-\Ln f=(\Id-\Pn)f+(\Pn f-\Ln f)
\end{equation}
separates the high-degree spectral tail from the low-degree aliasing term
$\Pn f-\Ln f\in\mathbb P_n$.  Quadrature therefore returns unresolved
information to the resolved space.  Further Sloan steps damp the spectral tail at high
degrees, but a fixed low-degree aliasing component need not gain any power of
$n$.  This is the mechanism behind aliasing saturation.  Related quadrature
effects in iterated methods were studied in, e.g.,
\cite{AtkinsonBogomolny1987,GolbergBowman1998,GolbergBowman1998Residual}.

For \(f\in H^r(\Sph)\), \(r>1\), we assume that the quadrature rule is exact
through degree \(t\ge 2n\), and set \(s=t-n\).  For the discrete Galerkin and
product-integration Nystr\"om methods, define $\gamma_G=q\beta$ and $\gamma_N=(q+1)\beta$,
respectively.  When \(\gamma_\star>1\), we prove a unified estimate
\begin{equation*}
  \|u-\mathcal S_q(u_{n,\mathcal Q}^{\star})\|_{L^\infty}
  \leq C\left(
    n^{-(r+\gamma_\star-1)}+s^{-r}
  \right)\|f\|_{H^r},
\end{equation*}
where \(\star\in\{G,N\}\) denotes the discrete Galerkin or
product-integration Nystr\"om method.  The first term is the filtered
spectral-tail contribution, while the second is the quadrature-aliasing
contribution.  Their balance defines the critical scale
$s_{n,\star}^{\rm crit}
  =n^{(r+\gamma_\star-1)/r}$.
  Above this scale, sufficient overintegration recovers the quadrature-free
rate of spectral tail.  Below it, a constant-mode argument combined with the optimal Sobolev
quadrature lower bound obtained in
\cite{BrauchartHesse2007,HesseSloan2005Lower}
yields the matching worst-case order \(s^{-r}\) for positive quadrature rules
of quasi-optimal cardinality.  Under
Assumption~\ref{ass:exact-order}, the spectral-tail rate is itself sharp, so
the critical scale becomes an intrinsic recovery--saturation
threshold at every fixed Sloan depth; see details in Section \ref{sec:discrete}.

We further replace polynomial exactness by Marcinkiewicz--Zygmund stability
\cite{filbir2011marcinkiewicz,LuWang2023,MR4124840,mhaskar2001spherical,
Wu2026SpectralMultiplier}.
Using Gram-corrected least squares, we retain the same tail--aliasing
decomposition, with explicit dependence on the Marcinkiewicz--Zygmund
stability constant; see Section~\ref{sec:inexact}.

\subsection{Contributions and organization}
Let \(\Rop:=(\Id-\Top)^{-1}\) and \(\Bop:=\Top\Rop\).
Table~\ref{tab:four-methods} summarizes the four starting approximations
considered in this paper.  Their representation formulas are
derived in the corresponding sections below.

\begin{table}[htbp]
  \centering
  \caption{The four basic starting methods.}
  \label{tab:four-methods}
  \begin{tabular}{lcc}
  \toprule
    starting method
      & orthogonal projection $\Pn$
      & hyperinterpolation $\Ln$ \\
    \midrule
    Galerkin
      & $u_n^G=\Rop\Pn f$
      & $u_{n,\mathcal Q}^G=\Rop\Ln f$ \\
    \shortstack[l]{degenerate kernel/Nystr\"om}
      & $u_n^D=f+\Bop\Pn f$
      & $u_{n,\mathcal Q}^N=f+\Bop\Ln f$ \\
    \bottomrule
  \end{tabular}
\end{table}

Our main contributions are as follows.
\begin{enumerate}[label=\textup{(\roman*)}]
\item We derive unified spherical harmonic error representations for the four
methods in Table~\ref{tab:four-methods}, separating the spectral tail from the
low-degree aliasing introduced by quadrature.
\item For positive polynomially exact quadrature, we establish matching upper
and lower bounds and identify the recovery--saturation scale
$s_{n,\star}^{\rm crit}$.  Under the exact-order multiplier assumption,
this scale is intrinsic and sharp.
\item For inexact sampling, we replace hyperinterpolation by Gram-corrected
least squares under a Marcinkiewicz--Zygmund inequality and derive
corresponding error estimates, which are further illustrated numerically.
\end{enumerate}
Here \emph{exact} means quadrature exactness of degree at least \(2n\), whereas
\emph{inexact} means that no such requirement is imposed.  In both cases the
starting approximation is numerical, while the Sloan map is defined using the
continuous operator \(\Top\); fully discrete iterations based on powers of a
numerical operator are not considered.  Throughout, the Sloan depth \(q\) is
fixed as \(n\to\infty\).

Section~\ref{sec:preliminaries} collects the spherical harmonic notation,
resolvent identities, and multiplier estimates.  Section~\ref{sec:setting}
treats exact projection coefficients, while Sections~\ref{sec:discrete} and
\ref{sec:inexact} analyze exact and inexact quadrature.  Numerical results are
reported in Section~\ref{sec:numerics}, followed by conclusions.

\section{Spherical harmonic setting and multiplier estimates}
\label{sec:preliminaries}

We work with real-valued functions on \(\Sph\).  The space \(C(\Sph)\) of continuous functions is
equipped with the uniform norm
$\|v\|_{L^\infty(\Sph)}
  =\max_{x\in\Sph}\abs{v(x)}$,
while \(L^2(\Sph)\) denotes the Hilbert space with inner product
$\langle v,z\rangle_{L^2(\Sph)}
  =\int_{\Sph}v(x)z(x)\dd\omega(x)$ and induced norm
$\|v\|_{L^2(\Sph)}
  =\langle v,v\rangle_{L^2(\Sph)}^{1/2}$.
Quadrature is understood pointwise on \(C(\Sph)\), whereas
orthogonal expansions and projections are taken in \(L^2(\Sph)\).

For \(\ell\in\Nzero\), a spherical harmonic of degree \(\ell\) is the
restriction to \(\Sph\) of a homogeneous harmonic polynomial of degree
\(\ell\) in \(\mathbb R^3\).  The degree-\(\ell\) spherical harmonics form a
\((2\ell+1)\)-dimensional subspace of \(L^2(\Sph)\) and satisfy
$-\Delta_{\Sph}Y_{\ell,k}
  =\ell(\ell+1)Y_{\ell,k}$,
where \(\Delta_{\Sph}\) denotes the Laplace--Beltrami operator on \(\Sph\).
For each \(\ell\), we choose an orthonormal basis
\(\{Y_{\ell,k}:1\leq k\leq2\ell+1\}\).  The resulting collection over all
degrees forms an orthonormal basis of \(L^2(\Sph)\).  The addition formula reads
\begin{equation}\label{eq:addition}
  \sum_{k=1}^{2\ell+1}\abs{Y_{\ell,k}(x)}^2
  =\frac{2\ell+1}{4\pi},
  \qquad x\in\Sph.
\end{equation}
For \(v\in L^2(\Sph)\), its Fourier--Laplace coefficients are
$\widehat v_{\ell,k}
  =\int_{\Sph}v(x)Y_{\ell,k}(x)\dd\omega(x)$.
For \(r\in\mathbb R\), we use the Sobolev space defined through the
Laplace--Beltrami spectrum, with norm
\begin{equation}\label{eq:sobolev}
  \|v\|_{H^r(\Sph)}^2
  =\sum_{\ell=0}^{\infty}
    \bigl(1+\ell(\ell+1)\bigr)^r
    \sum_{k=1}^{2\ell+1}\abs{\widehat v_{\ell,k}}^2.
\end{equation}
Since \(1+\ell(\ell+1)\asymp(1+\ell)^2\), this norm is equivalent to the
coefficient norm with weights \((1+\ell)^{2r}\) used below.  In particular,
\(H^r(\Sph)\hookrightarrow C(\Sph)\) continuously for \(r>1\). We denote by \(\lambda_\ell:=1+\ell(\ell+1)\).

\begin{proposition}[Multiplier mappings]
Under Assumption~\ref{ass:operator}, for every \(a\in\mathbb R\),
\(\Id-\Top\) is boundedly invertible on \(H^a(\Sph)\).  With
$\Rop:=(\Id-\Top)^{-1}$ and $\Bop:=\Top\Rop$,
we have
\[
  \Top Y_{\ell,k}=\mu_\ell Y_{\ell,k},\qquad
  \Rop Y_{\ell,k}=\frac{1}{1-\mu_\ell}Y_{\ell,k},\qquad
  \Bop Y_{\ell,k}=\frac{\mu_\ell}{1-\mu_\ell}Y_{\ell,k}.
\]
Moreover,
\[
\begin{aligned}
  &\Top:H^a(\Sph)\to H^{a+\beta}(\Sph),
  &&\|\Top\|_{H^a\to H^{a+\beta}}\le C_{\Top},\\
  &\Rop:H^a(\Sph)\to H^a(\Sph),
  &&\|\Rop\|_{H^a\to H^a}\le\delta^{-1},\\
  &\Bop:H^a(\Sph)\to H^{a+\beta}(\Sph),
  &&\|\Bop\|_{H^a\to H^{a+\beta}}\le C_{\Top}\delta^{-1}.
\end{aligned}
\]\end{proposition}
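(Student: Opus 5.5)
The plan is to work entirely on the coefficient side, treating each operator as a Fourier multiplier. I would first record the diagonal action \(\Top Y_{\ell,k}=\mu_\ell Y_{\ell,k}\) from the Funk--Hecke formula \eqref{eq:funk-hecke-intro}. Then for \(v\in H^a(\Sph)\) I would define \(\Top\) on \(H^a\), including \(a<0\) by density or duality, via
\[
\widehat{(\Top v)}_{\ell,k}=\mu_\ell\widehat v_{\ell,k}.
\]
For \(a\ge0\) this agrees with the integral definition on \(L^2\), since \(\Top\) is bounded on \(L^2\) when \(\Phi\in L^1\): the \(\mu_\ell\) are bounded. It then has to be shown that the three multiplier sequences \(\mu_\ell\), \((1-\mu_\ell)^{-1}\) and \(\mu_\ell(1-\mu_\ell)^{-1}\) are bounded with the appropriate polynomial weights.

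For the mapping bound of \(\Top\), I would use the weighted norm \eqref{eq:sobolev} together with Assumption~\ref{ass:operator}:
\[
\|\Top v\|_{H^{a+\beta}}^2=\sum_\ell\lambda_\ell^{a+\beta}|\mu_\ell|^2\sum_k|\widehat v_{\ell,k}|^2 .
\]
The needed estimate is \(\lambda_\ell^{\beta}|\mu_\ell|^2\le C_{\Top}^2\). Here the main obstacle appears. We have \(\lambda_\ell=1+\ell(\ell+1)\) and \((1+\ell)^2=\lambda_\ell+\ell\ge\lambda_\ell\), so \(\lambda_\ell^{\beta}\le(1+\ell)^{2\beta}\). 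Hence
\[
\lambda_\ell^{\beta}|\mu_\ell|^2\le C_{\Top}^2(1+\ell)^{2\beta}(1+\ell)^{-2\beta}=C_{\Top}^2
\]
exactly, with no norm-equivalence constant. This gives \(\|\Top\|_{H^a\to H^{a+\beta}}\le C_{\Top}\). I will check the direction of this inequality carefully, since it is what makes the constant sharp.

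For invertibility, \(|1-\mu_\ell|\ge\delta\) shows that the candidate inverse \(\Rop\), defined by the multiplier \((1-\mu_\ell)^{-1}\), satisfies
\[
\|\Rop v\|_{H^a}^2\le\delta^{-2}\|v\|_{H^a}^2 .
\]
It is a two-sided inverse of \(\Id-\Top\), because the multipliers compose coefficientwise and \(\Id-\Top\) is itself bounded on \(H^a\). Hence \(\Id-\Top\) is boundedly invertible on \(H^a\) with \(\|\Rop\|\le\delta^{-1}\), and \(\Rop Y_{\ell,k}=(1-\mu_\ell)^{-1}Y_{\ell,k}\).

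Finally, \(\Bop=\Top\Rop\) has multiplier \(\mu_\ell/(1-\mu_\ell)\), which gives the eigenrelation for \(\Bop\). Composing the two bounds,
\[
\|\Bop\|_{H^a\to H^{a+\beta}}\le\|\Top\|_{H^a\to H^{a+\beta}}\|\Rop\|_{H^a\to H^a}\le C_{\Top}\delta^{-1}.
\]
Alternatively, one can bound the multiplier directly by \(\lambda_\ell^{\beta/2}|\mu_\ell|/|1-\mu_\ell|\le C_{\Top}/\delta\).
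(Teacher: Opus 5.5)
Your proposal is correct and follows the paper's proof essentially step for step. It uses the same coefficientwise multiplier argument and the same inequality \(\lambda_\ell\le(1+\ell)^2\) to get the exact constant \(C_{\Top}\), and it defines \(\Rop\) by the multiplier \((1-\mu_\ell)^{-1}\) with the \(\delta^{-1}\) bound. The only minor differences are that you bound \(\Bop\) by composing the \(\Top\) and \(\Rop\) bounds, whereas the paper bounds the multiplier \(\mu_\ell/(1-\mu_\ell)\) directly as in your alternative, and that you make explicit the extension to \(a<0\), which the paper leaves implicit.
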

\begin{proof}
 Since
\(\lambda_\ell\leq(1+\ell)^2\), Assumption~\ref{ass:operator} gives
$|\mu_\ell|^2
  \leq C_{\Top}^2(1+\ell)^{-2\beta}
  \leq C_{\Top}^2\lambda_\ell^{-\beta}$.
By \eqref{eq:funk-hecke-intro},
\((\widehat{\Top v})_{\ell,k}=\mu_\ell\widehat v_{\ell,k}\).  Hence
\begin{equation*}
  \|\Top v\|_{H^{a+\beta}}^2=\sum_{\ell=0}^{\infty}
    \lambda_\ell^{a+\beta}|\mu_\ell|^2
    \sum_{k=1}^{2\ell+1}|\widehat v_{\ell,k}|^2 \leq C_{\Top}^2
    \sum_{\ell=0}^{\infty}
    \lambda_\ell^a
    \sum_{k=1}^{2\ell+1}|\widehat v_{\ell,k}|^2
   =C_{\Top}^2\|v\|_{H^a}^2.
\end{equation*}
Since
\(
  (\Id-\Top)Y_{\ell,k}=(1-\mu_\ell)Y_{\ell,k}
\),
consider the operator
\[
  \Rop v
  :=\sum_{\ell=0}^{\infty}\sum_{k=1}^{2\ell+1}
    \frac{\widehat v_{\ell,k}}{1-\mu_\ell}Y_{\ell,k}.
\]
The nonresonance condition yields
\(
  \|\Rop v\|_{H^a}^2
  \leq \delta^{-2}\|v\|_{H^a}^2.
\)
Thus \(\Rop\) is bounded on \(H^a(\Sph)\), and coefficientwise
multiplication gives
\(
  (\Id-\Top)\Rop=\Rop(\Id-\Top)=\Id.
\)
Hence \(\Rop=(\Id-\Top)^{-1}\).
Finally, \(\Bop=\Top\Rop\) has multiplier
\(\mu_\ell/(1-\mu_\ell)\), and therefore
\begin{equation*}
  \|\Bop v\|_{H^{a+\beta}}^2
  =
  \sum_{\ell=0}^{\infty}
  \frac{\lambda_\ell^{a+\beta}|\mu_\ell|^2}
       {|1-\mu_\ell|^2}
  \sum_{k=1}^{2\ell+1}|\widehat v_{\ell,k}|^2 \leq
  C_{\Top}^2\delta^{-2}\|v\|_{H^a}^2,
\end{equation*}
which completes the proof.
\end{proof}
The definitions also yield the resolvent identities
\begin{equation}\label{eq:B-identities}
  \Rop=\Id+\Bop,
  \qquad
  \Bop=\Top+\Top\Bop=\Top+\Bop\Top.
\end{equation}
Indeed, \((\Id-\Top)\Rop=\Id\) gives
\(\Rop-\Id=\Top\Rop=\Bop\).  Since \(\Top\) and \(\Rop\) commute,
multiplying \(\Rop=\Id+\Bop\) by \(\Top\) gives the remaining identities.

Let \(\gamma\ge0\) and \((m_\ell)_{\ell\ge0}\) be a scalar sequence
satisfying $|m_\ell|\le C_{\Mop}(1+\ell)^{-\gamma}$
for $\ell\in\Nzero$.
The associated spherical multiplier operator \(\Mop\) is defined by
\begin{equation}\label{equ:operatorM}
  \Mop v
  :=\sum_{\ell=0}^{\infty}\sum_{k=1}^{2\ell+1}
    m_\ell\widehat v_{\ell,k}Y_{\ell,k},
  \qquad v\in L^2(\Sph).
\end{equation}
The series converges in \(L^2(\Sph)\), with
\(\Mop Y_{\ell,k}=m_\ell Y_{\ell,k}\).  In particular, \(\Mop\) is bounded
on \(L^2(\Sph)\), and \(\gamma\) is its guaranteed algebraic smoothing order.

\begin{lemma}[Filtered spectral tail]\label{lem:tail}
If $v\in H^r(\Sph)$ and $r+\gamma>1$, then, for $n\geq1$,
\begin{equation}\label{eq:tail-bound}
  \|\Mop(\Id-\Pn)v\|_{L^\infty(\Sph)}
  \leq C n^{-(r+\gamma-1)}\|v\|_{H^r(\Sph)}.
\end{equation}
\end{lemma}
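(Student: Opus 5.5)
The plan is to bound the uniform norm pointwise through the spherical harmonic expansion, and then to control the resulting degree sums by Cauchy--Schwarz together with the addition formula~\eqref{eq:addition}. Since the multiplier $\Mop$ is diagonal, the function to be estimated is
\[
  \Mop(\Id-\Pn)v=\sum_{\ell>n}\sum_{k=1}^{2\ell+1}m_\ell\widehat v_{\ell,k}Y_{\ell,k}.
\]
First we show that this series converges absolutely and uniformly, so that it defines a continuous function and the bound may be taken pointwise. The estimate below gives this directly: it produces a bound on the sum of absolute values that is uniform in $x\in\Sph$.

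Fix $x\in\Sph$. For each degree $\ell$, apply Cauchy--Schwarz in $k$ and then~\eqref{eq:addition}:
\[
  \Bigl|\sum_{k}\widehat v_{\ell,k}Y_{\ell,k}(x)\Bigr|
  \le\Bigl(\sum_k|\widehat v_{\ell,k}|^2\Bigr)^{1/2}\Bigl(\frac{2\ell+1}{4\pi}\Bigr)^{1/2}.
\]
Multiply by $|m_\ell|\le C_{\Mop}(1+\ell)^{-\gamma}$ and sum over $\ell>n$. A second Cauchy--Schwarz, this time over $\ell$ with the weights $(1+\ell)^{r}$ and $(1+\ell)^{-r}$, gives
\[
  |\Mop(\Id-\Pn)v(x)|\le C\Bigl(\sum_{\ell>n}(1+\ell)^{2r}\sum_k|\widehat v_{\ell,k}|^2\Bigr)^{1/2}
  \Bigl(\sum_{\ell>n}(2\ell+1)(1+\ell)^{-2r-2\gamma}\Bigr)^{1/2}.
\]
The first factor is at most a constant multiple of $\|v\|_{H^r}$, because $\lambda_\ell\asymp(1+\ell)^2$.

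The one step that actually uses the hypothesis is the tail sum. Its terms are of order $(1+\ell)^{1-2r-2\gamma}$, and this series converges precisely when $2(r+\gamma)-1>1$, that is, when $r+\gamma>1$. Comparing the sum with an integral then gives
\[
  \sum_{\ell>n}(1+\ell)^{1-2(r+\gamma)}\le C\,n^{-2(r+\gamma-1)}\qquad (n\ge1),
\]
and taking the square root produces the claimed rate $n^{-(r+\gamma-1)}$. The constant depends only on $r$, $\gamma$ and $C_{\Mop}$. There is no real obstacle beyond this bookkeeping. The one subtle point is to note that we need not assume $r>1$: the smoothing supplied by $\Mop$ compensates, so only $r+\gamma>1$ is required for the uniform bound.
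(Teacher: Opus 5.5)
Your proof is correct and is essentially the paper's argument: Cauchy--Schwarz against the $H^r$ weights, the addition formula, and an integral comparison for $\sum_{\ell>n}(1+\ell)^{1-2(r+\gamma)}$. The only differences are that you apply Cauchy--Schwarz in two stages (first over $k$, then over $\ell$) where the paper applies it once over $(\ell,k)$, and that you state the uniform convergence explicitly where the paper leaves it implicit.
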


\begin{proof}
By \eqref{equ:operatorM},
\[
  \Mop(\Id-\Pn)v
  =\sum_{\ell>n}\sum_{k=1}^{2\ell+1}
    m_\ell\widehat v_{\ell,k}Y_{\ell,k}.
\]
Hence, by Cauchy--Schwarz, the multiplier bound, and the equivalence of the
Sobolev norm with the coefficient norm weighted by \((1+\ell)^{2r}\),
\begin{align*}
 \abs{\Mop(\Id-\Pn)v(x)}
 &\leq
 C\|v\|_{H^r}
 \left(
 \sum_{\ell>n}(1+\ell)^{-2(r+\gamma)}
 \sum_{k=1}^{2\ell+1}\abs{Y_{\ell,k}(x)}^2
 \right)^{1/2}  \\
 &=\frac{C}{\sqrt{4\pi}}\|v\|_{H^r}
 \left(
 \sum_{\ell>n}(2\ell+1)(1+\ell)^{-2(r+\gamma)}
 \right)^{1/2} \\
 &\leq
 C\|v\|_{H^r}
 \left(
 \sum_{\ell>n}(1+\ell)^{1-2(r+\gamma)}
 \right)^{1/2},
\end{align*}
where the equality follows from the addition formula
\eqref{eq:addition}.  Since \(r+\gamma>1\),
\[
  \sum_{\ell>n}(1+\ell)^{1-2(r+\gamma)}
  \leq
  \int_n^\infty(1+x)^{1-2(r+\gamma)}\,\dd x
  =
  \frac{(1+n)^{2-2(r+\gamma)}}{2(r+\gamma)-2}.
\]
Taking square roots and then the supremum over \(x\in\Sph\) gives
\eqref{eq:tail-bound}.
\end{proof}

The exponent in Lemma~\ref{lem:tail} is sharp when the multiplier has exact
algebraic order.

\begin{proposition}[Sharp filtered-tail rate]\label{prop:sharp-tail}
Suppose that the real multiplier sequence in \eqref{equ:operatorM} satisfies
\begin{equation}\label{equ:twosided}
  c_{\Mop}(1+\ell)^{-\gamma}
  \leq |m_\ell|
  \leq C_{\Mop}(1+\ell)^{-\gamma},
  \qquad \ell\geq\ell_0,
\end{equation}
for some \(c_{\Mop},C_{\Mop}>0\).  If \(r+\gamma>1\), then, for all
sufficiently large \(n\),
\begin{equation}\label{eq:sharp-tail-operator-norm}
  \|\Mop(\Id-\Pn)\|_{H^r(\Sph)\to L^\infty(\Sph)}
  \asymp n^{-(r+\gamma-1)}.
\end{equation}
\end{proposition}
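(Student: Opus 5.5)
The upper bound is Lemma~\ref{lem:tail}, applied with the upper half of \eqref{equ:twosided}. For $\ell<\ell_0$ only finitely many multipliers are involved, so $|m_\ell|\le C(1+\ell)^{-\gamma}$ for all $\ell$ after enlarging the constant. For $n\ge\ell_0$ these low degrees do not even enter $\Mop(\Id-\Pn)$. The whole task is therefore the lower bound, which we obtain by testing the operator on an explicit extremal function.

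We will use the zonal function that realizes equality in the Cauchy--Schwarz step of the proof of Lemma~\ref{lem:tail}. Fix $x_0\in\Sph$ and $n\ge\ell_0$. Write $\sigma_\ell=\operatorname{sign}(m_\ell)$; this is well defined because $m_\ell\neq0$ for $\ell\ge\ell_0$. Choose $L=2n$ and define
\[
  v_n:=\sum_{\ell=n+1}^{2n}\sum_{k=1}^{2\ell+1}\sigma_\ell(1+\ell)^{-2r-\gamma}Y_{\ell,k}(x_0)Y_{\ell,k}.
\]
This is a polynomial, so it lies in $H^r(\Sph)$. By the addition formula \eqref{eq:addition},
\[
  \|v_n\|_{H^r}^2\asymp\sum_{\ell=n+1}^{2n}(1+\ell)^{2r}(1+\ell)^{-4r-2\gamma}\frac{2\ell+1}{4\pi}\asymp\sum_{\ell=n+1}^{2n}(1+\ell)^{1-2r-2\gamma}\asymp n^{2-2r-2\gamma}.
\]
The last comparison holds because each of the $n$ terms is $\asymp n^{1-2r-2\gamma}$; this avoids any summability issue. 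Since $v_n$ has no components of degree $\le n$, $(\Id-\Pn)v_n=v_n$. Evaluating at $x_0$ and using the addition formula again gives
\[
  \Mop(\Id-\Pn)v_n(x_0)=\sum_{\ell=n+1}^{2n}|m_\ell|(1+\ell)^{-2r-\gamma}\frac{2\ell+1}{4\pi}.
\]
Every summand is nonnegative, and by the lower half of \eqref{equ:twosided} each is at least $c(1+\ell)^{1-2r-2\gamma}$, so the sum is $\gtrsim n^{2-2r-2\gamma}$.

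Dividing by the norm gives
\[
  \frac{\|\Mop(\Id-\Pn)v_n\|_{L^\infty}}{\|v_n\|_{H^r}}\gtrsim\frac{n^{2-2r-2\gamma}}{n^{1-r-\gamma}}=n^{-(r+\gamma-1)},
\]
with constants independent of $n$. This is the required lower bound.

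The main obstacle is aligning signs so that no cancellation occurs at $x_0$. The factor $\sigma_\ell$ handles this: it is exactly why the statement asks for a real multiplier sequence, and why only the modulus $|m_\ell|$ needs to be bounded below. A second point is to keep the test function supported in a dyadic block $n<\ell\le2n$. This makes the estimate elementary and keeps the constants uniform; with $n\ge\ell_0$ it also ensures that every degree used satisfies \eqref{equ:twosided}, which is the meaning of ``sufficiently large $n$'' in the statement.
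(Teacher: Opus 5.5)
Your proposal is correct and follows essentially the same argument as the paper: a test function supported on the dyadic block $n<\ell\le 2n$, built from $Y_{\ell,k}(x_0)$ with sign-aligned weights and evaluated at $x_0$, with the upper bound taken from Lemma~\ref{lem:tail}. The only difference is cosmetic. The paper uses the exact Cauchy--Schwarz extremal weights $A_n^{-1/2}\lambda_\ell^{-r}m_\ell$, so that $\|v_n\|_{H^r}=1$ and the value at $x_0$ is exactly $A_n^{1/2}$; your weights $\sigma_\ell(1+\ell)^{-2r-\gamma}$ are only comparable to these and give the same bound up to constants.
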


\begin{proof}
For sufficiently large \(n\), the upper bound follows from
Lemma~\ref{lem:tail}.  For the reverse bound, we fix \(x_0\in\Sph\) and define
\[
  A_n
  :=\sum_{n<\ell\leq2n}
    \lambda_\ell^{-r}|m_\ell|^2
    \sum_{k=1}^{2\ell+1}|Y_{\ell,k}(x_0)|^2.
\]
For \(n\geq\ell_0\), the addition formula \eqref{eq:addition} and the bound \eqref{equ:twosided}
give
\(
  A_n\asymp n^{2-2(r+\gamma)}.
\)
Define \(v_n\) by $(\widehat{v_n})_{\ell,k}
  =A_n^{-1/2}\lambda_\ell^{-r}m_\ell Y_{\ell,k}(x_0)$
  for $n<\ell\leq2n$,
with all other coefficients equal to zero.  Then
\[
  \|v_n\|_{H^r}^2
  =A_n^{-1}
    \sum_{n<\ell\leq2n}
      \lambda_\ell^{-r}|m_\ell|^2
      \sum_{k=1}^{2\ell+1}|Y_{\ell,k}(x_0)|^2
  =1.
\]
Moreover,
\(
  \Mop(\Id-\Pn)v_n(x_0)
  =A_n^{1/2}
  \asymp n^{-(r+\gamma-1)}.
\)
Hence
\(
  \|\Mop(\Id-\Pn)\|_{H^r\to L^\infty}
  \gtrsim n^{-(r+\gamma-1)},
\)
which, together with Lemma~\ref{lem:tail}, proves
\eqref{eq:sharp-tail-operator-norm}.
\end{proof}

\begin{lemma}[Filtered low-degree error]\label{lem:low}
If \(g\in\mathbb P_n\) with \(n\ge2\), then
\begin{equation}\label{eq:low-bound}
  \|\Mop g\|_{L^\infty(\Sph)}
  \leq C\rho_\gamma(n)\|g\|_{L^2(\Sph)},
\end{equation}
where
\[
  \rho_\gamma(n)=
  \begin{cases}
    n^{1-\gamma}, & 0\leq\gamma<1,\\
    (\log(n+1))^{1/2}, & \gamma=1,\\
    1, & \gamma>1.
  \end{cases}
\]
\end{lemma}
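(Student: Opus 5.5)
The plan is to repeat the pointwise Cauchy--Schwarz argument from the proof of Lemma~\ref{lem:tail}, this time over the finite range \(0\le\ell\le n\) and with the \(L^2\) norm in place of the \(H^r\) norm. Expanding \(g\in\mathbb P_n\) in spherical harmonics gives, by \eqref{equ:operatorM},
\[
  \Mop g(x)=\sum_{\ell=0}^{n}\sum_{k=1}^{2\ell+1}m_\ell\,\widehat g_{\ell,k}\,Y_{\ell,k}(x).
\]
Applying Cauchy--Schwarz in the pair \((\ell,k)\), with weights split as \(\widehat g_{\ell,k}\cdot m_\ell Y_{\ell,k}(x)\), yields
\[
  \abs{\Mop g(x)}\le \|g\|_{L^2(\Sph)}
  \Bigl(\sum_{\ell=0}^{n}|m_\ell|^2\sum_{k=1}^{2\ell+1}|Y_{\ell,k}(x)|^2\Bigr)^{1/2}.
\]
Parseval is used here to identify \(\sum|\widehat g_{\ell,k}|^2\) with \(\|g\|_{L^2}^2\).

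Next, I would apply the addition formula \eqref{eq:addition} to make the inner sum independent of \(x\), and then use the multiplier bound \(|m_\ell|\le C_{\Mop}(1+\ell)^{-\gamma}\). Together these give
\[
  \|\Mop g\|_{L^\infty}\le \frac{C_{\Mop}}{\sqrt{4\pi}}\|g\|_{L^2}
  \Bigl(\sum_{\ell=0}^{n}(2\ell+1)(1+\ell)^{-2\gamma}\Bigr)^{1/2}
  \le C\|g\|_{L^2}\Bigl(\sum_{j=1}^{n+1}j^{1-2\gamma}\Bigr)^{1/2}.
\]

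The remaining work is an elementary estimate of \(S_n:=\sum_{j=1}^{n+1}j^{1-2\gamma}\) in three cases, which I would handle by comparison with integrals.
\begin{itemize}
\item For \(0\le\gamma<1\), the exponent \(1-2\gamma>-1\), so \(S_n\le C(n+1)^{2-2\gamma}\le C' n^{2-2\gamma}\). This uses \(n\ge2\), and it also covers the monotone-increasing case \(\gamma\le 1/2\) through the trivial bound of \((n+1)\) times the largest term. Taking square roots gives \(n^{1-\gamma}\).
\item For \(\gamma=1\), \(S_n\) is a harmonic sum, bounded by \(1+\log(n+1)\le C\log(n+1)\). The condition \(n\ge2\) ensures \(\log(n+1)\ge\log 3>0\), so the additive constant can be absorbed.
\item For \(\gamma>1\), \(\sum_{j\ge1} j^{1-2\gamma}\) converges, so \(S_n\le\zeta(2\gamma-1)\) uniformly in \(n\).
\end{itemize}

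Taking square roots in each case gives \(\rho_\gamma(n)\), with a constant \(C\) depending only on \(\gamma\) and \(C_{\Mop}\). There is no serious obstacle here. The only care needed is the borderline case \(\gamma=1\), where one must check that the constant can be absorbed into \(\log(n+1)\); this is exactly why the statement restricts to \(n\ge2\).
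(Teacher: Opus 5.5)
Your proposal is correct and is essentially the paper's own proof: pointwise Cauchy--Schwarz with Parseval, the addition formula \eqref{eq:addition} to remove the dependence on $x$, the multiplier bound $|m_\ell|\le C_{\Mop}(1+\ell)^{-\gamma}$, and an integral-test estimate of $\sum_{\ell=0}^{n}(1+\ell)^{1-2\gamma}$ in the three regimes of $\gamma$. One small correction: $n\ge1$ would already suffice to absorb the additive constant when $\gamma=1$, since $\log 2>0$, so the hypothesis $n\ge2$ is a convenient normalization rather than being forced by that step.
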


\begin{proof}
Since \(g\in\mathbb P_n\),
\[
  \Mop g(x)
  =\sum_{\ell=0}^{n}\sum_{k=1}^{2\ell+1}
    m_\ell\widehat g_{\ell,k}Y_{\ell,k}(x).
\]
For fixed \(x\in\Sph\), the Cauchy--Schwarz inequality, Parseval's identity, and the
addition formula \eqref{eq:addition} give
\begin{align*}
 |\Mop g(x)|
 &\leq
 \|g\|_{L^2(\Sph)}
 \left(
   \sum_{\ell=0}^{n}|m_\ell|^2
   \sum_{k=1}^{2\ell+1}|Y_{\ell,k}(x)|^2
 \right)^{1/2}=\frac{\|g\|_{L^2(\Sph)}}{\sqrt{4\pi}}
 \left(
   \sum_{\ell=0}^{n}(2\ell+1)|m_\ell|^2
 \right)^{1/2} \\
 &\leq
 C\|g\|_{L^2(\Sph)}
 \left(
   \sum_{\ell=0}^{n}(1+\ell)^{1-2\gamma}
 \right)^{1/2}.
\end{align*}
The last sum satisfies
\[
  \sum_{\ell=0}^{n}(1+\ell)^{1-2\gamma}
  \leq
  \begin{cases}
    Cn^{2-2\gamma}, & 0\leq\gamma<1,\\
    C\log(n+1), & \gamma=1,\\
    C, & \gamma>1,
  \end{cases}
\]
by the integral test, with convergence of the infinite series when
\(\gamma>1\).  Taking square roots and the supremum over \(x\in\Sph\)
proves \eqref{eq:low-bound}.
\end{proof}

\section{Quadrature-free Sloan iteration}
\label{sec:setting}

The Sloan error identity \eqref{eq:error-intro} reduces the analysis to the
action of \(\Top^q\) on the initial error.  In the absence of quadrature, the
Galerkin and degenerate-kernel errors are supported entirely in degrees
\(\ell>n\).  Since \(\Top\) and \(\Pn\) are diagonal in the same
spherical harmonic basis, both starting approximations admit explicit spectral
representations, and their Sloan errors follow directly from
Lemma~\ref{lem:tail}.

\subsection{Galerkin methods}

For \(f\in L^2(\Sph)\), the Galerkin solution
\(u_n^G\in\mathbb P_n\) is defined by
\begin{equation}\label{eq:galerkin}
  \left\langle(\Id-\Top)u_n^G,p\right\rangle_{L^2(\Sph)}
  =
  \left\langle f,p\right\rangle_{L^2(\Sph)}
  \qquad \forall\, p\in\mathbb P_n.
\end{equation}

\begin{proposition}[Galerkin solution]
Under Assumption~\ref{ass:operator}, problem~\eqref{eq:galerkin} has a unique
solution satisfying
\begin{equation}\label{eq:G-closed-form}
  u_n^G=\Rop\Pn f=\Pn\Rop f=\Pn u.
\end{equation}
Its Fourier--Laplace coefficients are
\begin{equation*}
  (\widehat{u_n^G})_{\ell,k}=
  \begin{cases}
    \widehat f_{\ell,k}/(1-\mu_\ell), & 0\leq\ell\leq n,\\
    0, & \ell>n.
  \end{cases}
\end{equation*}
Hence the Galerkin error is the spectral tail
\begin{equation}\label{eq:G-projection}
  u-u_n^G
  =\Rop(\Id-\Pn)f
  =(\Id-\Pn)\Rop f.
\end{equation}
\end{proposition}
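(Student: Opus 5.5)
The plan is to test \eqref{eq:galerkin} against the basis functions \(Y_{\ell,k}\), \(\ell\le n\). Because \(\Top\) is self-adjoint and diagonal in this basis, the Galerkin system decouples into scalar equations, one per coefficient. Writing \(u_n^G=\sum_{\ell\le n}\sum_k c_{\ell,k}Y_{\ell,k}\), Funk--Hecke \eqref{eq:funk-hecke-intro} gives
\[
(\Id-\Top)u_n^G=\sum_{\ell\le n}\sum_k(1-\mu_\ell)c_{\ell,k}Y_{\ell,k}.
\]
Taking \(p=Y_{\ell,k}\) in \eqref{eq:galerkin} and using orthonormality then yields \((1-\mu_\ell)c_{\ell,k}=\widehat f_{\ell,k}\). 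The nonresonance condition \(\abs{1-\mu_\ell}\ge\delta\) from Assumption~\ref{ass:operator} lets us divide, so \(c_{\ell,k}=\widehat f_{\ell,k}/(1-\mu_\ell)\).

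This settles existence and uniqueness at once. The square linear system on the finite-dimensional space \(\mathbb P_n\) is diagonal with nonzero diagonal entries, so it is uniquely solvable. Conversely, this choice of coefficients satisfies \eqref{eq:galerkin} for every \(p\in\mathbb P_n\) by linearity, since the \(Y_{\ell,k}\) with \(\ell\le n\) span \(\mathbb P_n\).

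Next we identify the closed form. By the Multiplier mappings proposition, \(\Rop\) has multiplier \(1/(1-\mu_\ell)\), and \(\Pn\) has multiplier \(\mathbf 1_{\ell\le n}\). Both \(\Rop\Pn f\) and \(\Pn\Rop f\) therefore have exactly the coefficients computed above, so they coincide with \(u_n^G\). Since \(u=\Rop f\), we also have \(\Pn\Rop f=\Pn u\). The interchange of \(\Rop\) with the infinite series is justified because \(\Rop\) is bounded on \(L^2(\Sph)\), which is the case \(a=0\) of that proposition.

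Finally, the error formula follows directly:
\[
u-u_n^G=\Rop f-\Rop\Pn f=\Rop(\Id-\Pn)f,
\]
and commutativity of the two diagonal operators gives \(\Rop(\Id-\Pn)f=(\Id-\Pn)\Rop f\). There is no real obstacle in this argument. The only point needing care is to verify that the unique solvability on \(\mathbb P_n\) follows from the diagonal structure, rather than relying on an abstract Céa-type argument.
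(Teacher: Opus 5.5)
Your proof is correct and is the paper's argument written coefficientwise. The paper notes that \((\Id-\Top)u_n^G\in\mathbb P_n\), so the Galerkin condition reads \((\Id-\Top)u_n^G=\Pn f\) and hence \(u_n^G=\Rop\Pn f\); you instead test against each \(Y_{\ell,k}\), \(\ell\le n\), to get the same identity coefficient by coefficient. (Only the diagonal action of \(\Top\) is used, not its self-adjointness, and your explicit existence check is a harmless addition.)
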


\begin{proof}
Since \(u_n^G\in\mathbb P_n\) and
\(\Top\mathbb P_n\subseteq\mathbb P_n\), \eqref{eq:galerkin} gives
$\Pn (\Id-\Top)u_n^G=
  (\Id-\Top)u_n^G=\Pn f$.
Thus \(u_n^G=\Rop\Pn f=\Pn\Rop f=\Pn u\), since \(\Rop\) commutes with
\(\Pn\).  The coefficient formula and the error representation follow immediately.
\end{proof}

By \eqref{eq:G-projection}, the initial Galerkin error is supported entirely
in degrees \(\ell>n\).  Each Sloan step therefore applies one further factor
of \(\mu_\ell\) to the spectral tail.

\begin{theorem}[Galerkin Sloan iterates]
Let Assumption~\ref{ass:operator} hold, let \(q\in\Nzero\), and suppose that
\(f\in H^r(\Sph)\) with \(r+q\beta>1\).  Then
$u-\mathcal S_q(u_n^G)
  =\Mop_{G,q}(\Id-\Pn)f$,
where \(\Mop_{G,q}\) is the spherical multiplier
\begin{equation}\label{eq:G-multiplier}
  \Mop_{G,q}Y_{\ell,k}
  =m_\ell^{G,q}Y_{\ell,k},
  \qquad
  m_\ell^{G,q}
  =\frac{\mu_\ell^q}{1-\mu_\ell}.
\end{equation}
Moreover,
\begin{equation}\label{eq:G-rate}
  \|u-\mathcal S_q(u_n^G)\|_{L^\infty}
  \leq
  C n^{-(r+q\beta-1)}\|f\|_{H^r}.
\end{equation}
\end{theorem}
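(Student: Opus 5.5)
The plan is to combine the Sloan error identity \eqref{eq:error-intro} with the Galerkin error representation \eqref{eq:G-projection}, identify the result as a spherical multiplier applied to the spectral tail of \(f\), and then invoke Lemma~\ref{lem:tail}.

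First, by \eqref{eq:error-intro} with \(v=u_n^G\), we have \(u-\mathcal S_q(u_n^G)=\Top^q(u-u_n^G)\). By \eqref{eq:G-projection}, \(u-u_n^G=\Rop(\Id-\Pn)f\), so
\[
  u-\mathcal S_q(u_n^G)=\Top^q\Rop(\Id-\Pn)f .
\]
The operators \(\Top\) and \(\Rop\) are diagonal in \(\{Y_{\ell,k}\}\) with multipliers \(\mu_\ell\) and \((1-\mu_\ell)^{-1}\), by the Multiplier mappings proposition. Hence \(\Top^q\Rop\) acts on \(Y_{\ell,k}\) by the factor \(\mu_\ell^q/(1-\mu_\ell)=m_\ell^{G,q}\). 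To identify \(\Top^q\Rop\) with \(\Mop_{G,q}\) as defined in \eqref{equ:operatorM}, it suffices to check this coefficientwise on \(L^2(\Sph)\). Both operators are bounded on \(L^2\), so agreement on the orthonormal basis gives equality. This proves the representation formula.

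Second, Assumption~\ref{ass:operator} gives
\[
  |m_\ell^{G,q}|\le C_{\Top}^q(1+\ell)^{-q\beta}\delta^{-1}.
\]
So \(\Mop_{G,q}\) is a multiplier of the form \eqref{equ:operatorM} with \(\gamma=q\beta\ge0\) and \(C_{\Mop}=C_{\Top}^q\delta^{-1}\). Since \(f\in H^r\) and \(r+q\beta>1\), Lemma~\ref{lem:tail} applies with \(v=f\) and yields \eqref{eq:G-rate} for \(n\ge1\), with \(C\) depending on \(q,\beta,r,C_{\Top},\delta\).

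The only point needing care is a mild technicality. When \(q=0\) and \(r>1\), the bound is the plain projection error. When \(q\beta\) is large and \(r\) is small or even nonpositive, the right-hand sides still make sense because Lemma~\ref{lem:tail}'s Cauchy--Schwarz argument uses only \(r+\gamma>1\). Pointwise convergence of the filtered series is then guaranteed, which identifies the \(L^2\) limit with a continuous function. I do not expect a real obstacle; the main step is the bookkeeping that \(\Top^q\Rop\) commutes with \(\Id-\Pn\) and equals \(\Mop_{G,q}\).
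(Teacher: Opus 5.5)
Your proposal is correct and follows the paper's proof essentially verbatim. The Sloan identity combined with \eqref{eq:G-projection} gives $\Top^q\Rop(\Id-\Pn)f=\Mop_{G,q}(\Id-\Pn)f$, Assumption~\ref{ass:operator} gives $|m_\ell^{G,q}|\le C_{\Top}^q\delta^{-1}(1+\ell)^{-q\beta}$, and Lemma~\ref{lem:tail} with $\gamma=q\beta$ yields \eqref{eq:G-rate}; your coefficientwise identification of $\Top^q\Rop$ with $\Mop_{G,q}$ and your remark on continuity of the filtered series are harmless details the paper leaves implicit.
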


\begin{proof}
By the Sloan error identity \eqref{eq:error-intro} and
\eqref{eq:G-projection},
\[
  u-\mathcal S_q(u_n^G)
  =\Top^q\Rop(\Id-\Pn)f
  =\Mop_{G,q}(\Id-\Pn)f.
\]
Assumption~\ref{ass:operator} gives
\[
  |m_\ell^{G,q}|
  =\frac{|\mu_\ell|^q}{|1-\mu_\ell|}
  \leq C(1+\ell)^{-q\beta}.
\]
Lemma~\ref{lem:tail} with \(\gamma=q\beta\) then yields
\eqref{eq:G-rate}.
\end{proof}
\begin{remark}[The quadrature-free Galerkin rate]
Each Galerkin Sloan step contributes one additional factor \(\mu_\ell\) on
the spectral tail and therefore adds \(\beta\) orders of spectral smoothing.
The loss of one power in \eqref{eq:G-rate} results from passage to the
uniform norm on the two-dimensional sphere.  In \(L^2\), the corresponding
estimate is
\(
  \|u-\mathcal S_q(u_n^G)\|_{L^2(\Sph)}
  \leq Cn^{-(r+q\beta)}\|f\|_{H^r(\Sph)}.
\)
For \(q=0\), \eqref{eq:G-rate} reduces to the standard uniform projection
rate \(O(n^{-(r-1)})\) for \(r>1\). 
\end{remark}

\subsection{Degenerate-kernel methods}

Since \(\Top\) and \(\Pn\) are diagonal in the same spherical harmonic basis,
we define
$\Top_n:=\Top\Pn=\Pn\Top=\Pn\Top\Pn$.
Then \(\operatorname{ran}(\Top_n)\subseteq\mathbb P_n\), so
\(\operatorname{rank}(\Top_n)\leq (n+1)^2\).  More explicitly,
\begin{equation}\label{eq:degenerate-kernel}
  (\Top_n v)(x)=\int_{\Sph}K_n(x,y)v(y)\dd\omega(y),
  \qquad
  K_n(x,y)=\sum_{\ell=0}^n\sum_{k=1}^{2\ell+1}
  \mu_\ell Y_{\ell,k}(x)Y_{\ell,k}(y).
\end{equation}
Since \(K_n\) is a finite sum of separable terms, it is a degenerate kernel,
and \(\Top_n\) is the corresponding finite-rank approximation of \(\Top\)
used in the classical degenerate-kernel method
\cite{Sloan1976Degenerate}.  We define the degenerate-kernel solution \(u_n^D\) by
\begin{equation}\label{eq:degenerate}
  (\Id-\Top\Pn)u_n^D=f.
\end{equation}
Unlike the Galerkin solution, $u_n^D$ generally does not belong to
$\mathbb P_n$.

\begin{proposition}[Degenerate-kernel solution]
Under Assumption~\ref{ass:operator}, \(\Id-\Top\Pn\) is invertible on
\(L^2(\Sph)\), with
$(\Id-\Top\Pn)^{-1}=\Id+\Bop\Pn$.
Hence the degenerate-kernel approximation \eqref{eq:degenerate} is uniquely
defined and satisfies
\begin{equation}\label{eq:D-formula}
  u_n^D=f+\Bop\Pn f,
  \qquad
  \Pn u_n^D=\Rop\Pn f=\Pn u.
\end{equation}
Its Fourier--Laplace coefficients are
\begin{equation}\label{equ:dkcoeff}
  (\widehat{u_n^D})_{\ell,k}=
  \begin{cases}
    \widehat f_{\ell,k}/(1-\mu_\ell), & 0\leq\ell\leq n,\\
    \widehat f_{\ell,k}, & \ell>n.
  \end{cases}
\end{equation}
Moreover,
\begin{equation}\label{eq:D-error}
  u-u_n^D=\Bop(\Id-\Pn)f.
\end{equation}
\end{proposition}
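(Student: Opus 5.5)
The proof is a direct verification in the spherical harmonic basis, in which every operator involved is diagonal. We first establish the inverse formula by checking that \(\Id+\Bop\Pn\) is a two-sided inverse of \(\Id-\Top\Pn\) on \(L^2(\Sph)\). Since \(\Top\), \(\Pn\), \(\Bop\) all commute and \(\Pn^2=\Pn\), expanding gives
\[
  (\Id-\Top\Pn)(\Id+\Bop\Pn)=\Id+\Bop\Pn-\Top\Pn-\Top\Bop\Pn
  =\Id+(\Bop-\Top-\Top\Bop)\Pn=\Id,
\]
using the resolvent identity \(\Bop=\Top+\Top\Bop\) from \eqref{eq:B-identities}. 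The product in the other order reduces in the same way, using \(\Bop=\Top+\Bop\Top\) together with commutativity. Boundedness of \(\Id+\Bop\Pn\) on \(L^2\) follows from the multiplier-mappings proposition with \(a=0\). This gives invertibility, hence unique solvability of \eqref{eq:degenerate}, and \(u_n^D=(\Id+\Bop\Pn)f=f+\Bop\Pn f\).

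Next we read off the coefficients. For \(\ell\le n\), \(\Pn\) acts as the identity, so the coefficient is \(\widehat f_{\ell,k}(1+\mu_\ell/(1-\mu_\ell))=\widehat f_{\ell,k}/(1-\mu_\ell)\). For \(\ell>n\), \(\Pn\) annihilates the mode, leaving \(\widehat f_{\ell,k}\). This is \eqref{equ:dkcoeff}. Applying \(\Pn\) then gives \(\Pn u_n^D=\Pn f+\Bop\Pn f=(\Id+\Bop)\Pn f=\Rop\Pn f\), and \(\Rop\Pn f=\Pn\Rop f=\Pn u\) by commutativity.

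Finally, for the error we write \(u=\Rop f=f+\Bop f\) (since \(\Rop=\Id+\Bop\)). Subtracting \(u_n^D=f+\Bop\Pn f\) yields \(u-u_n^D=\Bop f-\Bop\Pn f=\Bop(\Id-\Pn)f\), which is \eqref{eq:D-error}.

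There is no real obstacle in this argument. The only point requiring care is justifying the commutation relations and the identity \(\Pn^2=\Pn\) inside the operator algebra. The cleanest route is to verify every identity coefficientwise on each \(Y_{\ell,k}\) and extend by \(L^2\)-continuity, which is legitimate because all the operators involved are bounded multipliers.
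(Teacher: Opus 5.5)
Your proposal is correct and follows the paper's proof essentially step for step. You verify the two-sided inverse via $\Bop=\Top+\Top\Bop=\Top+\Bop\Top$ using commutativity and $\Pn^2=\Pn$, derive $\Pn u_n^D=\Rop\Pn f=\Pn\Rop f$, read off the coefficients, and obtain the error from $u=f+\Bop f$; your added remarks on boundedness of $\Id+\Bop\Pn$ and coefficientwise justification of the commutation relations just make explicit what the paper leaves implicit.
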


\begin{proof}
Since \(\Top\), \(\Bop\), and \(\Pn\) commute and \(\Pn^2=\Pn\),
the identities \eqref{eq:B-identities} give
\begin{align*}
  (\Id-\Top\Pn)(\Id+\Bop\Pn)
  &=\Id+(\Bop-\Top-\Top\Bop)\Pn=\Id,\\
  (\Id+\Bop\Pn)(\Id-\Top\Pn)
  &=\Id+(\Bop-\Top-\Bop\Top)\Pn=\Id,
\end{align*}
yielding the inverse. Hence
\(
  u_n^D=(\Id+\Bop\Pn)f=f+\Bop\Pn f.
\)
Applying \(\Pn\) and using \(\Rop=\Id+\Bop\) yields
$\Pn u_n^D
  =\Pn f+\Bop\Pn f
  =\Rop\Pn f
  =\Pn\Rop f
  =\Pn u$.
The formula \eqref{equ:dkcoeff} follows immediately.  Finally, 
\eqref{eq:D-error} follows from \(u=\Rop f=f+\Bop f\).
\end{proof}

\begin{theorem}[Degenerate-kernel Sloan iterates]
Let Assumption~\ref{ass:operator} hold, let $q\in\Nzero$, and suppose
$f\in H^r(\Sph)$ with $r+(q+1)\beta>1$.  Then
\begin{equation}
  u-\mathcal S_q(u_n^D)=\Mop_{D,q}(\Id-\Pn)f,
\end{equation}
where
\begin{equation}\label{eq:D-multiplier}
  \Mop_{D,q}Y_{\ell,k}
  =m_\ell^{D,q}Y_{\ell,k},
  \qquad
  m_\ell^{D,q}=\frac{\mu_\ell^{q+1}}{1-\mu_\ell}.
\end{equation}
Consequently,
\begin{equation}\label{eq:D-rate}
  \|u-\mathcal S_q(u_n^D)\|_{L^\infty}
  \leq C n^{-(r+(q+1)\beta-1)}\|f\|_{H^r}.
\end{equation}
\end{theorem}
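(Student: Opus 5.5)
The proof will mirror the Galerkin case, starting from the Sloan error identity \eqref{eq:error-intro} with $v=u_n^D$. By \eqref{eq:D-error} in the preceding proposition,
\[
  u-\mathcal S_q(u_n^D)=\Top^q(u-u_n^D)=\Top^q\Bop(\Id-\Pn)f .
\]
The operators $\Top$, $\Bop$ and $\Pn$ are all diagonal in the basis $\{Y_{\ell,k}\}$. By the multiplier-mapping proposition, $\Bop$ has multiplier $\mu_\ell/(1-\mu_\ell)$. Hence $\Top^q\Bop$ is the spherical multiplier with symbol $\mu_\ell^q\cdot\mu_\ell/(1-\mu_\ell)=\mu_\ell^{q+1}/(1-\mu_\ell)$, which is exactly $\Mop_{D,q}$ in \eqref{eq:D-multiplier}. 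This gives the representation $u-\mathcal S_q(u_n^D)=\Mop_{D,q}(\Id-\Pn)f$.

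The identity is first valid in $L^2(\Sph)$. I will check it coefficientwise: both sides have coefficients $m_\ell^{D,q}\widehat f_{\ell,k}$ for $\ell>n$ and zero otherwise. The $L^\infty$ statement then refers to the continuous representative. Continuity follows from the absolute convergence established in the proof of Lemma~\ref{lem:tail}, which applies because $r+(q+1)\beta>1$.

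For the rate, Assumption~\ref{ass:operator} gives
\[
  |m_\ell^{D,q}|\le \frac{|\mu_\ell|^{q+1}}{|1-\mu_\ell|}\le C_{\Top}^{q+1}\delta^{-1}(1+\ell)^{-(q+1)\beta}.
\]
So $\Mop_{D,q}$ satisfies the multiplier hypothesis with $\gamma=(q+1)\beta\ge0$ and $C_{\Mop}=C_{\Top}^{q+1}\delta^{-1}$. Applying Lemma~\ref{lem:tail} with $v=f\in H^r(\Sph)$ and $r+\gamma>1$ yields \eqref{eq:D-rate} for $n\ge1$.

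No step is a real obstacle. The only point needing care is justifying the commutation and composition of the multipliers, so that $\Top^q\Bop=\Mop_{D,q}$ as operators on $L^2$. This follows directly because each operator acts diagonally on the complete orthonormal basis and each is bounded on $L^2$.
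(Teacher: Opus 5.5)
Your proposal is correct and follows the paper's proof: apply the Sloan error identity to $u-u_n^D=\Bop(\Id-\Pn)f$, identify $\Top^q\Bop$ as the multiplier $\mu_\ell^{q+1}/(1-\mu_\ell)$, bound it by $C(1+\ell)^{-(q+1)\beta}$, and invoke Lemma~\ref{lem:tail} with $\gamma=(q+1)\beta$. Your extra remarks on coefficientwise verification and on continuity of the representative are harmless refinements that the paper leaves implicit.
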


\begin{proof}
Apply the Sloan error identity \eqref{eq:error-intro} to
\eqref{eq:D-error}.  The resulting operator $\Top^q\Bop$ has multiplier
$m_\ell^{D,q}$, which is bounded by
$C(1+\ell)^{-(q+1)\beta}$.  Lemma~\ref{lem:tail} gives the result.
\end{proof}
\begin{remark}[The quadrature-free degenerate-kernel rate]
The initial error \eqref{eq:D-error} already contains one smoothing factor
through \(\Bop\).  Accordingly,
\(
  \|u-\mathcal S_q(u_n^D)\|_{L^2(\Sph)}
  \leq Cn^{-(r+(q+1)\beta)}\|f\|_{H^r(\Sph)}.
\)
As in the Galerkin case, passage to the uniform norm on $\Sph$ costs one power of \(n\).
\end{remark}

\begin{remark}[One-layer relation]\label{rem:one-layer}
For every \(q\in\Nzero\),
\(
  u-\mathcal S_q(u_n^D)
  =u-\mathcal S_{q+1}(u_n^G).
\)
Indeed, \eqref{eq:G-multiplier} and \eqref{eq:D-multiplier} give
\(m_\ell^{D,q}=m_\ell^{G,q+1}\) for every \(\ell\).
\end{remark}
\section{Sloan iteration with exact quadrature}
\label{sec:discrete}

Replacing \(\Pn\) by the hyperinterpolation operator \(\Ln\) leads to two
quadrature-based schemes: discrete Galerkin methods and product-integration
Nystr\"om methods.  Under polynomial exactness, the representations from
Section~\ref{sec:setting} remain valid, but
$f-\Ln f=(\Id-\Pn)f+(\Pn-\Ln)f$
now contains a spectral tail and a low-degree aliasing term.  We first
estimate the aliasing contribution, then derive the two discrete starting
methods and their Sloan errors, and finally establish the
recovery--saturation threshold.
Throughout this section, let \(\{\mathcal Q_{N_n}\}_{n\ge1}\) be a family of
positive quadrature rules,
\[
  \mathcal Q_{N_n}(g)
  =\sum_{j=1}^{N_n}w_{j,n}g(x_{j,n}),
  \qquad x_{j,n}\in\Sph,\quad w_{j,n}>0.
\]
Assume that \(\mathcal Q_{N_n}\) has algebraic degree of precision
\(t_n\ge2n\), that is,
\begin{equation}\label{eq:quadrature}
  \mathcal Q_{N_n}(p)
  =\int_{\Sph}p(x)\dd\omega(x),
  \qquad p\in\mathbb P_{t_n}.
\end{equation}
The condition \(t_n\ge2n\) is the standard requirement for degree-\(n\)
hyperinterpolation; see, e.g.,
\cite{an2025path,MR2274179,sloan1995polynomial,SloanWomersley2000}.
For fixed \(n\), we denote by \(\mathcal Q=\mathcal Q_{N_n}\) for simplicity. Let \(\Ln\) denote
the associated hyperinterpolation operator, and set
$\Qip{v}{z}:=\mathcal Q_{N_n}(vz)$
and
$s_n:=t_n-n$.
Then \(s_n\ge n\), and the excess exactness beyond \(2n\) enters the aliasing
estimate through \(s_n\).  Exactness on \(\mathbb P_{2n}\) also implies
\(\Ln p=p\) for every \(p\in\mathbb P_n\); see, e.g.,
\cite{sloan1995polynomial}.

\begin{lemma}[Aliasing estimate]\label{lem:alias-source}
Under \eqref{eq:quadrature}, if \(v\in H^r(\Sph)\) with \(r>1\), then
\begin{equation}\label{eq:alias-sobolev}
  \|\Pn v-\Ln v\|_{L^2}
  \leq C s_n^{-r}\|v\|_{H^r}.
\end{equation}
\end{lemma}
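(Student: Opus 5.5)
Since both \(\Pn v\) and \(\Ln v\) lie in \(\mathbb P_n\), we plan to work coefficient by coefficient and reduce the estimate to a quadrature error bound for a single function.

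\textbf{Step 1: polynomial subtraction.} Fix any \(p\in\mathbb P_{s_n}\). For \(\ell\le n\) the product \(pY_{\ell,k}\) lies in \(\mathbb P_{s_n+n}=\mathbb P_{t_n}\), so the quadrature integrates it exactly. Hence \(\Qip{p}{Y_{\ell,k}}=\widehat p_{\ell,k}\), and
\[
  (\Pn v-\Ln v)_{\ell,k}
  =\int_{\Sph}(v-p)Y_{\ell,k}\dd\omega-\mathcal Q\bigl((v-p)Y_{\ell,k}\bigr).
\]
We will take \(p\) to be a near-best uniform approximation of \(v\) from \(\mathbb P_{s_n}\), and write \(e:=v-p\). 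This gives
\[
  \Pn v-\Ln v=\Pn e-\Ln e .
\]

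\textbf{Step 2: bound the two terms.} The first term is immediate: \(\|\Pn e\|_{L^2}\le\|e\|_{L^2}\le\sqrt{4\pi}\,\|e\|_{L^\infty}\).

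For the second term we use the standard stability of hyperinterpolation for positive rules exact on \(\mathbb P_{2n}\). Since \(\Ln e\in\mathbb P_n\), exactness gives \(\|\Ln e\|_{L^2}^2=\Qip{\Ln e}{\Ln e}\). By construction \(\Qip{e-\Ln e}{q}=0\) for all \(q\in\mathbb P_n\), so \(\Ln\) is the \(\Qip{\cdot}{\cdot}\)-orthogonal projection onto \(\mathbb P_n\). Consequently
\[
  \Qip{\Ln e}{\Ln e}\le\Qip{e}{e}\le\Bigl(\sum_j w_j\Bigr)\|e\|_{L^\infty}^2=4\pi\|e\|_{L^\infty}^2,
\]
where positivity of the weights is used, together with \(\sum_j w_j=4\pi\) (exactness on constants). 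Combining the two terms,
\[
  \|\Pn v-\Ln v\|_{L^2}\le C\,E_{s_n}(v)_{\infty},
\]
where \(E_{s_n}(v)_{\infty}\) denotes the best uniform approximation error from \(\mathbb P_{s_n}\).

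\textbf{Step 3: the approximation estimate (main obstacle).} We need \(E_s(v)_\infty\le Cs^{-r}\|v\|_{H^r}\). A naive bound via the tail \((\Id-\mathcal P_s)v\) and Lemma~\ref{lem:tail} with \(\gamma=0\) only gives \(s^{-(r-1)}\), which loses a power. So the exact-\(L^2\)/\(L^\infty\) route is insufficient, and the gap must be closed some other way.

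\emph{First option.} Avoid \(L^\infty\) altogether. Use the optimal Sobolev worst-case quadrature error for positive rules exact on \(\mathbb P_{t}\), namely \(|\int g-\mathcal Q(g)|\le Ct^{-r}\|g\|_{H^r}\) (Hesse--Sloan, Brauchart--Hesse), already cited in the paper. Apply it to \(g=vY_{\ell,k}\), or, better, to the test functions \(v\,\Pn z\) obtained by duality: \(\|\Pn v-\Ln v\|_{L^2}=\sup_{\|z\|_{L^2}=1}\bigl[\int v\,\Pn z-\mathcal Q(v\,\Pn z)\bigr]\). A direct application of this bound involves \(\|v\,\Pn z\|_{H^r}\), which can grow with \(n\). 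To prevent this, we combine it with Step 1: first subtract the filtered de la Vallée-Poussin approximation \(p=V_{\lfloor s_n/2\rfloor}v\) of \(v\), with \(V_{\lfloor s_n/2\rfloor}v\in\mathbb P_{s_n}\). The remainder \(e=v-p\) then has spectrum only in degrees \(\gtrsim s_n/2\) and satisfies \(\|e\|_{H^{a}}\lesssim s_n^{a-r}\|v\|_{H^r}\) for \(a\le r\).

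\emph{Closing estimate.} Apply the positive-rule Sobolev quadrature bound at a smoothness level \(a\in(1,r]\) to \(e\,\Pn z\). Since \(n\le s_n\), Bernstein's inequality controls the polynomial factor on scale \(s_n\), and the weights \(s_n^{-a}\) and \(s_n^{a-r}\) combine to give \(s_n^{-r}\). This balancing of the \(H^a\) norm of the product \(e\,\Pn z\) against \(s_n\) is the delicate step we expect to be hardest. If it fails, the fallback is to invoke the known Jackson-type estimate for hyperinterpolation in \(L^2\) from the cited hyperinterpolation literature.
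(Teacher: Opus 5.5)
\textbf{There is a genuine gap: Step~3 does not close, and the fallback as stated does not give the \(s_n\)-dependence.}

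Your Steps 1 and 2 are correct. However, they reduce the problem to \(E_{s_n}(v)_\infty\), and in the worst case over the unit ball of \(H^r(\Sph)\) this is of order \(s_n^{-(r-1)}\). The loss of one power in the uniform norm is intrinsic on \(\Sph\), so that route cannot reach \(s_n^{-r}\).

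The repair in Step~3 fails as well. Your ``balancing'' of \(t_n^{-a}\) against \(\|e\|_{H^a}\lesssim s_n^{a-r}\) implicitly treats \(\Pn z\) as bounded in \(L^\infty\). In fact only \(\|\Pn z\|_{L^2}\le1\) is available, and \(\|\Pn z\|_{L^\infty}\) can be of size \(n\). Any product estimate pays for this:
\begin{itemize}
\item the bound \(\|e\,\Pn z\|_{H^a}\lesssim\|e\|_{H^a}\|\Pn z\|_{L^\infty}+\|e\|_{L^\infty}\|\Pn z\|_{H^a}\) gives \(t_n^{-a}\|e\,\Pn z\|_{H^a}\lesssim n\,s_n^{-r}+n^a s_n^{1-r-a}\);
\item the algebra property gives \(n^a s_n^{-r}\).
\end{itemize}
Bernstein's inequality cannot remove this loss, because it is a Nikolskii \(L^2\to L^\infty\) factor, not a derivative.

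The loss is also not an artifact of the product rule. Suppose \(s_n\ge4n\). Take \(v\) to be an \(H^r\)-normalized kernel bump at \(x_0\) with spectrum in \((s_n,2s_n)\), so that \(e=v\). Take \(\Pn z\) to be the normalized reproducing kernel of \(\mathbb P_n\) at \(x_0\). Then:
\begin{itemize}
\item \(e\,\Pn z\) has spectrum above \(3s_n/4\);
\item \(\|e\,\Pn z\|_{L^2}\gtrsim n\,s_n^{-r}\), since \(e\asymp s_n^{1-r}\) and \(\Pn z\asymp n\) on a cap of radius \(c/s_n\);
\item hence \(\|e\,\Pn z\|_{H^a}\gtrsim n\,s_n^{a-r}\).
\end{itemize}
So the worst-case quadrature bound applied to the product yields only \(n\,s_n^{-r}\).

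Your fallback does not recover the \(s_n\)-dependence either. The \(L^2\) Jackson estimate for hyperinterpolation, applied to \(\Ln\) itself, gives only \(\|\Pn v-\Ln v\|_{L^2}\le\|v-\Ln v\|_{L^2}\le Cn^{-r}\|v\|_{H^r}\). This is weaker than the lemma precisely in the overintegration regime \(s_n\gg n\), which is where the recovery branch needs it.

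The missing idea, which the paper uses, is to embed \(\Ln\) in a higher-degree hyperinterpolant built from the same rule. Set \(m_n=\lfloor t_n/2\rfloor\). Since \(2m_n\le t_n\), the positive \(t_n\)-exact rule defines a hyperinterpolant \(\mathcal L_{m_n}\), and \(m_n\ge n\). Its coefficients of degree \(\le n\) are the same numbers \(\mathcal Q(vY_{\ell,k})\). Hence \(\Ln v=\Pn\mathcal L_{m_n}v\), and
\[ \|\Pn v-\Ln v\|_{L^2}=\|\Pn(v-\mathcal L_{m_n}v)\|_{L^2}\le\|v-\mathcal L_{m_n}v\|_{L^2}\le Cm_n^{-r}\|v\|_{H^r}, \]
with \(m_n\asymp t_n\asymp s_n\).

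Alternatively, your Step 1--2 framework would close if you replaced the crude bound \(\mathcal Q(e^2)\le4\pi\|e\|_{L^\infty}^2\) by a discrete Sobolev estimate for positive \(t_n\)-exact rules, of the form \(\mathcal Q(e^2)\lesssim\|e\|_{L^2}^2+t_n^{-2a}\|e\|_{H^a}^2\) with \(a>1\). That estimate rests on the local regularity of positive-weight rules and is essentially the content of the Jackson estimate, so you would have to prove or cite it.
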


\begin{proof}
Let \(m_n=\lfloor t_n/2\rfloor\), and let
\(\mathcal L_{m_n}v\) be the degree-\(m_n\) hyperinterpolant associated with
\(\mathcal Q_{N_n}\).  Since \(2m_n\leq t_n\), the estimate for
hyperinterpolation in \cite[Theorem~1.2 and Section 2.5]{LuWang2023} gives
$\|v-\mathcal L_{m_n}v\|_{L^2}
  \leq C m_n^{-r}\|v\|_{H^r}$.
Since \(t_n\geq2n\), we have \(m_n\geq n\), and the coefficients of
\(\mathcal L_{m_n}v\) through degree \(n\) coincide with those of \(\Ln v\).
Hence
\(
  \Ln v=\Pn\mathcal L_{m_n}v,
\)
and therefore
$\|\Pn v-\Ln v\|_{L^2}
  =\|\Pn(v-\mathcal L_{m_n}v)\|_{L^2}
  \leq \|v-\mathcal L_{m_n}v\|_{L^2}
  \leq C m_n^{-r}\|v\|_{H^r}$.
Finally, \(t_n/2\leq s_n=t_n-n\leq t_n\) and
\(m_n\asymp t_n\), so \(m_n\asymp s_n\), which proves
\eqref{eq:alias-sobolev}.
\end{proof}

\subsection{Discrete Galerkin methods}

For $f\in C(\Sph)$, the discrete Galerkin solution
$u_{n,\mathcal Q}^G\in\mathbb{P}_n$ is defined by
\begin{equation}\label{eq:discrete-galerkin}
  \Qip{(\Id-\Top)u_{n,\mathcal Q}^G}{p}=\Qip{f}{p},
  \qquad p\in\mathbb{P}_n.
\end{equation}
Thus the same quadrature rule is used for the Galerkin inner products and for
the hyperinterpolation operator $\Ln$.

\begin{proposition}[Discrete Galerkin solution]
\label{prop:QG-formula}
Under Assumption~\ref{ass:operator} and \eqref{eq:quadrature},
problem~\eqref{eq:discrete-galerkin} has the unique solution
\begin{equation}\label{eq:QG-formula}
  u_{n,\mathcal Q}^G=\Rop\Ln f.
\end{equation}
Consequently,
\begin{equation}\label{eq:QG-direct-error}
  u-u_{n,\mathcal Q}^G=\Rop(f-\Ln f).
\end{equation}
\end{proposition}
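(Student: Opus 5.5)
The plan is to reduce the discrete Galerkin system \eqref{eq:discrete-galerkin} to a coefficient identity in the orthonormal basis of \(\mathbb P_n\), using the fact that \(\Top\) preserves \(\mathbb P_n\) and that \(\Qip{\cdot}{\cdot}\) coincides with the \(L^2\) inner product on \(\mathbb P_n\times\mathbb P_n\).

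First, take any candidate \(w\in\mathbb P_n\). Since \(\Top\mathbb P_n\subseteq\mathbb P_n\), we have \((\Id-\Top)w\in\mathbb P_n\). For \(p\in\mathbb P_n\) the product \(((\Id-\Top)w)p\) lies in \(\mathbb P_{2n}\subseteq\mathbb P_{t_n}\). Hence by \eqref{eq:quadrature},
\[
\Qip{(\Id-\Top)w}{p}=\ip{(\Id-\Top)w}{p}_{L^2(\Sph)}.
\]
On the right-hand side, testing with \(p=Y_{\ell,k}\), \(0\le\ell\le n\), gives \(\Qip{f}{Y_{\ell,k}}=\mathcal Q(fY_{\ell,k})\). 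This is exactly the \((\ell,k)\) coefficient of \(\Ln f\) by \eqref{eq:hyperinterpolation}. Since both sides of \eqref{eq:discrete-galerkin} are linear in \(p\), the system is equivalent to
\[
\ip{(\Id-\Top)w}{Y_{\ell,k}}_{L^2}=\ip{\Ln f}{Y_{\ell,k}}_{L^2}\quad\text{for all } 0\le\ell\le n,\ 1\le k\le 2\ell+1.
\]
Both \((\Id-\Top)w\) and \(\Ln f\) lie in \(\mathbb P_n\), so this says \((\Id-\Top)w=\Ln f\).

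Second, I invoke the multiplier proposition. It gives that \(\Id-\Top\) is injective with inverse \(\Rop\), and that \(\Rop\) maps \(\mathbb P_n\) into itself, since it acts diagonally with factors \(1/(1-\mu_\ell)\), well defined because \(|1-\mu_\ell|\ge\delta\). Therefore \(w=\Rop\Ln f\in\mathbb P_n\) is a solution, and it is the only one. This proves \eqref{eq:QG-formula}.

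Finally, \eqref{eq:QG-direct-error} follows from \(u=\Rop f\) by linearity: \(u-u^G_{n,\mathcal Q}=\Rop(f-\Ln f)\).

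There is no real obstacle here. The only point needing care is justifying that the quadrature inner product equals the \(L^2\) inner product on the left-hand side. This rests on the degree count \(2n\le t_n\) and on the invariance \(\Top\mathbb P_n\subseteq\mathbb P_n\). By contrast, no such identification is made on the right-hand side, because \(f\notin\mathbb P_n\) in general. That side is what produces \(\Ln f\) in place of \(\Pn f\).
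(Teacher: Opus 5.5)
Your proposal is correct and follows essentially the same route as the paper. You use $\Top\mathbb P_n\subseteq\mathbb P_n$ and exactness on $\mathbb P_{2n}$ to replace the left-hand quadrature inner product by the $L^2$ one, identify the right-hand side with $\ip{\Ln f}{p}$, conclude $(\Id-\Top)u_{n,\mathcal Q}^G=\Ln f$ in $\mathbb P_n$, and invert with $\Rop$; your explicit remark that $\Rop$ preserves $\mathbb P_n$, which is needed for existence, is a detail the paper leaves implicit.
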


\begin{proof}
Since \(u_{n,\mathcal Q}^G\in\mathbb P_n\) and
\(\Top\mathbb P_n\subseteq\mathbb P_n\), we have
\(w:=(\Id-\Top)u_{n,\mathcal Q}^G\in\mathbb P_n\).
Exactness on \(\mathbb P_{2n}\) and the definition of \(\Ln\) give
$\Qip{w}{p}=\langle w,p\rangle_{L^2}$ and
$\Qip{f}{p}=\langle \Ln f,p\rangle_{L^2}$ for
$p\in\mathbb P_n$.
Thus \eqref{eq:discrete-galerkin} implies
\(\langle w-\Ln f,p\rangle_{L^2}=0\) for all \(p\in\mathbb P_n\), and hence
\(w=\Ln f\).  Therefore
$u_{n,\mathcal Q}^G=\Rop\Ln f$,
which also proves existence and uniqueness.  Subtracting from
\(u=\Rop f\) gives \eqref{eq:QG-direct-error}.
\end{proof}

\subsection{Product-integration Nystr\"om methods}

For a weakly singular kernel, direct quadrature of
\(\Phi(x\cdot y)v(y)\) may require the singular value \(\Phi(1)\) when the
target \(x\) coincides with a quadrature point.  Product integration avoids
this difficulty by replacing \(v\) with its hyperinterpolant and integrating
the kernel against each spherical harmonic analytically via the
Funk--Hecke formula; see, e.g., \cite{MR5025409,Sloan1980ProductIntegration}.  Thus, we define
\(
  \Top_n^{\mathrm{PI}}v:=\Top\Ln v.
\)
Expanding \(\Ln v\) and its discrete coefficients gives the Nystr\"om form
\begin{equation*}
\begin{aligned}
  (\Top\Ln v)(x)
  &=\sum_{\ell=0}^{n}\sum_{k=1}^{2\ell+1}
    \mu_\ell\Qip{v}{Y_{\ell,k}}Y_{\ell,k}(x) =\sum_{j=1}^{N_n}w_{j,n}v(x_{j,n})
    \sum_{\ell=0}^{n}\sum_{k=1}^{2\ell+1}
    \mu_\ell Y_{\ell,k}(x)Y_{\ell,k}(x_{j,n}) \\
  &=\sum_{j=1}^{N_n}W_{j,n}(x)v(x_{j,n}),
\end{aligned}
\end{equation*}
where
\(
  W_{j,n}(x):=w_{j,n}K_n(x,x_{j,n})\).
Here the first equality uses the Funk--Hecke formula
\eqref{eq:funk-hecke-intro}, while the second uses the finite-rank kernel
\(K_n\) from \eqref{eq:degenerate-kernel}.  The weights \(W_{j,n}(x)\)
therefore incorporate the kernel analytically, and quadrature is applied
only to the coefficients of \(v\).  In contrast, direct Nystr\"om
quadrature would involve the possibly singular values
\(w_j\Phi(x\cdot x_j)\).

For \(f\in H^r(\Sph)\), \(r>1\), the product-integration Nystr\"om
solution \(u_{n,\mathcal Q}^N\) is defined by
\begin{equation}
  (\Id-\Top\Ln)u_{n,\mathcal Q}^N=f.
\end{equation}

\begin{proposition}[Product-integration Nystr\"om solution]
\label{prop:N-formula}
Under Assumption~\ref{ass:operator} and \eqref{eq:quadrature},
\(\Id-\Top\Ln\) is invertible on \(H^r(\Sph)\) for every \(r>1\), with
\begin{equation}\label{eq:N-inverse}
  (\Id-\Top\Ln)^{-1}=\Id+\Bop\Ln.
\end{equation}
Consequently,
\begin{equation}\label{eq:N-formula}
  u_{n,\mathcal Q}^N=f+\Bop\Ln f,
  \qquad
  u-u_{n,\mathcal Q}^N=\Bop(f-\Ln f).
\end{equation}
\end{proposition}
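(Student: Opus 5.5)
The approach is to mimic the degenerate-kernel argument, replacing \(\Pn\) by \(\Ln\). The one new ingredient is that \(\Ln\) is still idempotent and commutes appropriately with \(\Top\) and \(\Bop\) on \(\mathbb P_n\).

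First I would record the structural facts about \(\Ln\) on \(H^r(\Sph)\), \(r>1\). Since \(H^r\hookrightarrow C(\Sph)\), point evaluation is continuous, so \(\Ln:H^r\to\mathbb P_n\) is bounded. Its range is \(\mathbb P_n\), and exactness on \(\mathbb P_{2n}\) gives \(\Ln p=p\) for \(p\in\mathbb P_n\). Hence \(\Ln^2=\Ln\). Moreover \(\Top\mathbb P_n\subseteq\mathbb P_n\) and \(\Bop\mathbb P_n\subseteq\mathbb P_n\) by the multiplier description in the Multiplier mappings proposition. It follows that \(\Ln\Top\Ln=\Top\Ln\) and \(\Ln\Bop\Ln=\Bop\Ln\). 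Also \(\Top\) and \(\Bop\) map \(H^r\) into \(H^{r+\beta}\subseteq H^r\), so all compositions below are bounded operators on \(H^r\).

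Next I would verify the two-sided inverse directly:
\[
(\Id-\Top\Ln)(\Id+\Bop\Ln)=\Id+\Bop\Ln-\Top\Ln-\Top\Ln\Bop\Ln
=\Id+(\Bop-\Top-\Top\Bop)\Ln=\Id,
\]
where the middle equality uses \(\Ln\Bop\Ln=\Bop\Ln\) and the last uses \eqref{eq:B-identities}. Similarly,
\[
(\Id+\Bop\Ln)(\Id-\Top\Ln)=\Id+\Bop\Ln-\Top\Ln-\Bop\Ln\Top\Ln
=\Id+(\Bop-\Top-\Bop\Top)\Ln=\Id,
\]
now using \(\Ln\Top\Ln=\Top\Ln\). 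This proves \eqref{eq:N-inverse} and gives bounded invertibility on \(H^r\).

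Finally, \(u_{n,\mathcal Q}^N=(\Id+\Bop\Ln)f=f+\Bop\Ln f\). Since \(u=\Rop f=f+\Bop f\), subtracting gives \(u-u_{n,\mathcal Q}^N=\Bop(f-\Ln f)\).

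The only step needing care is the commutation substitute. Unlike \(\Pn\), the operator \(\Ln\) does not commute with \(\Top\) on all of \(H^r\). However, only the identities \(\Ln\Top\Ln=\Top\Ln\) and \(\Ln\Bop\Ln=\Bop\Ln\) are used, and these follow from the reproduction \(\Ln|_{\mathbb P_n}=\Id\) together with invariance of \(\mathbb P_n\). One must also make sure that \(\Ln\) is well defined on the domain, which is why the statement is posed on \(H^r\) with \(r>1\) rather than on \(L^2\).
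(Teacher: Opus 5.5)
Your proposal is correct and follows essentially the same route as the paper. It uses the identities \(\Ln\Top\Ln=\Top\Ln\) and \(\Ln\Bop\Ln=\Bop\Ln\), obtained from reproduction of \(\mathbb P_n\) and its invariance under \(\Top\) and \(\Bop\), to verify the two-sided inverse via \eqref{eq:B-identities}; it gets boundedness on \(H^r\) from the embedding \(H^r\hookrightarrow C(\Sph)\) and subtracts from \(u=f+\Bop f\).
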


\begin{proof}
Exactness on \(\mathbb P_{2n}\) implies that \(\Ln\) reproduces
\(\mathbb P_n\).  Since \(\operatorname{ran}(\Ln)=\mathbb P_n\) and
\(\Top\) and \(\Bop\) preserve \(\mathbb P_n\), we have $\Ln\Top\Ln=\Top\Ln$
and $\Ln\Bop\Ln=\Bop\Ln$. Using \eqref{eq:B-identities}, we therefore obtain
\begin{align*}
  (\Id-\Top\Ln)(\Id+\Bop\Ln)
  &=\Id+(\Bop-\Top-\Top\Bop)\Ln=\Id,\\
  (\Id+\Bop\Ln)(\Id-\Top\Ln)
  &=\Id+(\Bop-\Top-\Bop\Top)\Ln=\Id.
\end{align*}
For \(r>1\), the embedding \(H^r(\Sph)\hookrightarrow C(\Sph)\) makes
\(\Ln:H^r(\Sph)\to\mathbb P_n\) bounded, so \(\Id+\Bop\Ln\) is a bounded
two-sided inverse on \(H^r(\Sph)\).  This proves \eqref{eq:N-inverse} and
hence
\(
  u_{n,\mathcal Q}^N
  =(\Id+\Bop\Ln)f
  =f+\Bop\Ln f.
\)
Finally, since \(u=\Rop f=f+\Bop f\),
it follows \eqref{eq:N-formula}.
\end{proof}

\subsection{Sloan error decomposition and upper bounds}
The two discrete starting methods can now be treated in a unified form.
For \(\star\in\{G,N\}\), we set
$\varepsilon_G=0$ and $\varepsilon_N=1$,
and define the spherical multiplier \(\Mop_{\star,q}\) by
\begin{equation}\label{eq:paired-notation}
  \Mop_{\star,q}Y_{\ell,k}
  =
  \frac{\mu_\ell^{q+\varepsilon_\star}}{1-\mu_\ell}
  Y_{\ell,k}.
\end{equation}
Let \(u_{n,\mathcal Q}^{\star}\) denote the corresponding discrete Galerkin
or product-integration Nystr\"om solution.  Then, for
\(f\in C(\Sph)\) and \(q\in\Nzero\), the Sloan error identity
\eqref{eq:error-intro}, together with
\eqref{eq:QG-direct-error} and \eqref{eq:N-formula}, gives
\begin{equation}\label{eq:star-error}
  u-\mathcal S_q(u_{n,\mathcal Q}^{\star})
  =\Mop_{\star,q}(f-\Ln f).
\end{equation}
Using the decomposition \eqref{eq:split-intro},
\begin{equation}\label{eq:star-split}
  u-\mathcal S_q(u_{n,\mathcal Q}^{\star})
  =
  \Mop_{\star,q}(\Id-\Pn)f
  +
  \Mop_{\star,q}(\Pn f-\Ln f).
\end{equation}
The two terms are the filtered spectral tail and filtered aliasing term,
respectively.

\begin{theorem}[Sloan estimates for the discrete methods]\label{thm:Q}
Assume Assumption~\ref{ass:operator} and \eqref{eq:quadrature}.  Let
\(q\in\Nzero\), \(\star\in\{G,N\}\), and set
\(
  \gamma_\star=(q+\varepsilon_\star)\beta.
\)
If \(f\in H^r(\Sph)\) with \(r>1\), then
\begin{equation}\label{eq:Q-main-sobolev}
  \|u-\mathcal S_q(u_{n,\mathcal Q}^{\star})\|_{L^\infty}
  \leq
  C\left(
    n^{-(r+\gamma_\star-1)}
    +\rho_{\gamma_\star}(n)s_n^{-r}
  \right)\|f\|_{H^r}.
\end{equation}
\end{theorem}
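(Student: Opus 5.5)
The plan is to start from the split \eqref{eq:star-split} and bound its two terms separately in the uniform norm using the three multiplier lemmas of Section~\ref{sec:preliminaries}.

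First, we check the multiplier. By Assumption~\ref{ass:operator}, the multiplier of $\Mop_{\star,q}$ satisfies
\[
  \abs{\mu_\ell^{q+\varepsilon_\star}}/\abs{1-\mu_\ell}
  \le C_{\Top}^{q+\varepsilon_\star}\delta^{-1}(1+\ell)^{-\gamma_\star}.
\]
So $\Mop_{\star,q}$ is a spherical multiplier of the form \eqref{equ:operatorM} with smoothing order $\gamma=\gamma_\star\ge0$.

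Second, we bound the filtered spectral tail. Since $r>1$ and $\gamma_\star\ge0$, we have $r+\gamma_\star>1$, so Lemma~\ref{lem:tail} applies directly:
\[
  \|\Mop_{\star,q}(\Id-\Pn)f\|_{L^\infty}\le Cn^{-(r+\gamma_\star-1)}\|f\|_{H^r}.
\]

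Third, we bound the filtered aliasing term. The function $g:=\Pn f-\Ln f$ lies in $\mathbb P_n$, because both $\Pn f$ and $\Ln f$ do; note that $\Ln f$ is well defined since $H^r\hookrightarrow C(\Sph)$. For $n\ge2$, Lemma~\ref{lem:low} with $\gamma=\gamma_\star$ gives
\[
  \|\Mop_{\star,q}g\|_{L^\infty}\le C\rho_{\gamma_\star}(n)\|g\|_{L^2}.
\]
Lemma~\ref{lem:alias-source} then bounds $\|g\|_{L^2}\le Cs_n^{-r}\|f\|_{H^r}$. Adding the two contributions via the triangle inequality yields \eqref{eq:Q-main-sobolev}.

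The only remaining detail is the case $n=1$, which Lemma~\ref{lem:low} excludes. Here the claim is a finite-$n$ statement, and we argue directly. Cauchy--Schwarz with the addition formula gives
\[
  \|\Mop_{\star,q}g\|_{L^\infty}\le C\|g\|_{L^2}
\]
on $\mathbb P_1$. Since $\rho_{\gamma}(1)$ is a positive constant (we interpret it by the same formula), the bound holds after enlarging $C$.

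No step is a real obstacle. The substantive inputs, namely the aliasing estimate and the filtered-tail bound, are already established. The one point to check is that $\Mop_{\star,q}$ satisfies the multiplier hypothesis uniformly in $\ell$, including the low degrees where $\mu_\ell$ may be large; the bound $\abs{\mu_\ell}\le C_{\Top}(1+\ell)^{-\beta}$ holds for all $\ell$, so this is covered. The constant $C$ depends on $q$, $\beta$, $r$, $C_{\Top}$ and $\delta$, but not on $n$, $s_n$ or $f$.
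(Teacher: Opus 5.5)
Your proposal is correct and matches the paper's proof: you split via \eqref{eq:star-split}, bound the tail with Lemma~\ref{lem:tail} using the multiplier bound $C(1+\ell)^{-\gamma_\star}$, and bound the aliasing term by combining Lemma~\ref{lem:low} with Lemma~\ref{lem:alias-source}. Your extra remark on $n=1$, which Lemma~\ref{lem:low} formally excludes, is a harmless detail that the paper leaves implicit.
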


\begin{proof}
By Assumption~\ref{ass:operator},
$\left|
    {\mu_\ell^{q+\varepsilon_\star}}/{(1-\mu_\ell)}
  \right|
  \leq C(1+\ell)^{-\gamma_\star}$.
Applying Lemma~\ref{lem:tail} to the first term in
\eqref{eq:star-split} gives
$\|\Mop_{\star,q}(\Id-\Pn)f\|_{L^\infty}
  \leq
  Cn^{-(r+\gamma_\star-1)}\|f\|_{H^r}$.
Since \(\Pn f-\Ln f\in\mathbb P_n\), Lemmas~\ref{lem:low}
and~\ref{lem:alias-source} give
$\|\Mop_{\star,q}(\Pn f-\Ln f)\|_{L^\infty}
  \leq
  C\rho_{\gamma_\star}(n)s_n^{-r}\|f\|_{H^r}$.
Combining the two estimates proves \eqref{eq:Q-main-sobolev}.
\end{proof}

\begin{corollary}
In particular, \eqref{eq:Q-main-sobolev} becomes
\[
\begin{aligned}
 \|u-\mathcal S_q(u_{n,\mathcal Q}^{G})\|_{L^\infty}
 &\leq C\left(
 n^{-(r+q\beta-1)}
 +\rho_{q\beta}(n)s_n^{-r}\right)\|f\|_{H^r},\\
 \|u-\mathcal S_q(u_{n,\mathcal Q}^{N})\|_{L^\infty}
 &\leq C\left(
 n^{-(r+(q+1)\beta-1)}
 +\rho_{(q+1)\beta}(n)s_n^{-r}\right)\|f\|_{H^r}.
\end{aligned}
\]
\end{corollary}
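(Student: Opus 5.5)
This corollary is a direct specialization of Theorem~\ref{thm:Q}, so the plan is to substitute the two admissible values of \(\star\) and read off the exponents. The only data that change between the two methods are \(\varepsilon_\star\) and hence \(\gamma_\star=(q+\varepsilon_\star)\beta\). For \(\star=G\) we have \(\varepsilon_G=0\), so \(\gamma_G=q\beta\). For \(\star=N\) we have \(\varepsilon_N=1\), so \(\gamma_N=(q+1)\beta\).

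Inserting these into \eqref{eq:Q-main-sobolev} gives the stated tail exponents \(r+q\beta-1\) and \(r+(q+1)\beta-1\). It also gives the aliasing prefactors \(\rho_{q\beta}(n)\) and \(\rho_{(q+1)\beta}(n)\).

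Next I would check that the hypotheses of Theorem~\ref{thm:Q} are met in each case. These are Assumption~\ref{ass:operator}, exactness \eqref{eq:quadrature}, \(f\in H^r(\Sph)\) with \(r>1\), and \(q\in\Nzero\). No additional constraint such as \(r+\gamma_\star>1\) is needed separately, because \(r>1\) and \(\gamma_\star\ge0\) imply it automatically. This is exactly what Lemma~\ref{lem:tail} requires inside the proof of the theorem.

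Lemma~\ref{lem:low} is stated for \(n\ge2\). For \(n=1\) the aliasing term lies in \(\mathbb P_1\), and the bound holds with an adjusted constant by the same Cauchy--Schwarz argument. Beyond this there is no real obstacle: the argument amounts to recording the two instances of \eqref{eq:star-split}, with multipliers \(\mu_\ell^q/(1-\mu_\ell)\) and \(\mu_\ell^{q+1}/(1-\mu_\ell)\), and the constants \(C\) depend only on \(q\), \(\beta\), \(r\), \(C_{\Top}\) and \(\delta\).
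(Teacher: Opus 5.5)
Your proposal is correct and matches the paper, which gives no separate argument: the corollary is Theorem~\ref{thm:Q} with $\varepsilon_G=0$ and $\varepsilon_N=1$, so $\gamma_G=q\beta$ and $\gamma_N=(q+1)\beta$. Your extra checks — that $r+\gamma_\star>1$ follows from $r>1$, and that the case $n=1$ in Lemma~\ref{lem:low} only changes the constant — are harmless additions.
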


\begin{remark}[One-layer relation after quadrature]\label{rem:onelayer}
The quadrature-free one-layer relation in
Remark~\ref{rem:one-layer} persists after discretization:
for every \(q\in\Nzero\), $u-\mathcal S_q(u_{n,\mathcal Q}^N)
  =
  u-\mathcal S_{q+1}(u_{n,\mathcal Q}^G)$.
Indeed, both sides equal
\(\Top^{q+1}\Rop(f-\Ln f)\) by \eqref{eq:star-error}.
\end{remark}

\subsection{Recovery threshold and sharp saturation}

The constant harmonic transfers scalar quadrature error to a coefficient of
the Sloan error.  Combined with Theorem~\ref{thm:Q}, this yields matching
worst-case bounds.  For the \(n\)th quadrature rule, define the worst-case
error
\begin{equation*}
 E_{n,q}^{\star}(\mathcal Q_{N_n})
 :=\sup_{0\ne f\in H^r(\Sph)}
 \frac{\|u-\mathcal S_q
   (u_{n,\mathcal Q_{N_n}}^{\star})\|_{L^\infty}}
      {\|f\|_{H^r}}.
\end{equation*}
\begin{lemma}[Constant-mode lower bound]
\label{lem:alias-lower}
Let Assumption~\ref{ass:operator} hold, let \(r>1\), \(q\in\Nzero\), and
\(\star\in\{G,N\}\).  Suppose that \(\mathcal Q_{N_n}\) satisfies
\eqref{eq:quadrature} and that
\begin{equation}\label{eq:nonzero-constant-mode}
  m_{0}^{\star,q}
  =\frac{\mu_0^{q+\varepsilon_\star}}{1-\mu_0}\ne0.
\end{equation}
Then $E_{n,q}^{\star}(\mathcal Q_{N_n})
  \geq c_r |m_0^{\star,q}|N_n^{-r/2}$,
where \(c_r>0\) is independent of \(n\), \(t_n\), \(N_n\), and
\(\mathcal Q_{N_n}\).
\end{lemma}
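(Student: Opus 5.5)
The plan is to test the Sloan error against a single coefficient, the constant mode, and reduce the worst case to the quadrature error of $\mathcal Q_{N_n}$ on $H^r(\Sph)$. By \eqref{eq:star-error}, $u-\mathcal S_q(u_{n,\mathcal Q}^\star)=\Mop_{\star,q}(f-\Ln f)$. Its degree-zero Fourier--Laplace coefficient is
\[
m_0^{\star,q}\,\bigl(\widehat f_{0,1}-\Qip{f}{Y_{0,1}}\bigr)
= \frac{m_0^{\star,q}}{\sqrt{4\pi}}\Bigl(\int_{\Sph}f\dd\omega-\mathcal Q_{N_n}(f)\Bigr),
\]
because $Y_{0,1}=(4\pi)^{-1/2}$. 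The high-degree tail contributes nothing to this coefficient. Since the constant coefficient is an average over the sphere, we have $|\widehat g_{0,1}|\le \sqrt{4\pi}\,\|g\|_{L^\infty}$ for every continuous $g$. This gives the key bound
\[
\|u-\mathcal S_q(u_{n,\mathcal Q}^\star)\|_{L^\infty}\ \ge\ \frac{|m_0^{\star,q}|}{4\pi}\Bigl|\int_{\Sph}f\dd\omega-\mathcal Q_{N_n}(f)\Bigr|.
\]
Taking the supremum over $f$ with $\|f\|_{H^r}\le1$ then yields
\[
E^\star_{n,q}\ \ge\ \frac{|m_0^{\star,q}|}{4\pi}\,\mathrm{wce}(\mathcal Q_{N_n};H^r),
\]
where $\mathrm{wce}$ denotes the worst-case quadrature error on the unit ball of $H^r(\Sph)$.

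The second step invokes the optimal lower bound of Hesse--Sloan and Brauchart--Hesse \cite{HesseSloan2005Lower,BrauchartHesse2007}. For $r>1$, every $N$-point rule on $\Sph$ satisfies $\mathrm{wce}(\mathcal Q_N;H^r)\ge c_r N^{-r/2}$, with $c_r$ depending only on $r$ (and on the choice of equivalent norm). The bound uses only the node count, not positivity or exactness. Combining this with the first step gives the claim, with constant $c_r/(4\pi)$ independent of $n$, $t_n$, $N_n$ and the rule.

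The main point to get right is the applicability of the cited bound. We must check that it is stated in an equivalent $H^r$ norm; the Laplace--Beltrami norm \eqref{eq:sobolev} is equivalent to the weighted one up to constants depending on $r$, which are absorbed into $c_r$. We must also check that the bound holds for arbitrary nodes and weights, which I believe it does: the construction uses bump functions supported in caps avoiding all nodes, or a fooling function vanishing at every node with large integral. If only a version for rules exact on constants is available, that causes no difficulty here, since \eqref{eq:quadrature} gives exactness on $\mathbb P_{t_n}\supseteq\mathbb P_0$.

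If a self-contained argument is needed, I would sketch the fooling-function construction directly. Choose about $2N$ disjoint caps of radius $\asymp N^{-1/2}$, keep at least $N$ of them that contain no node, and place a smooth bump of height $1$ on each kept cap. Normalizing the sum gives an $f\ge0$ with $\mathcal Q_{N_n}(f)=0$, $\int f\gtrsim1$ and $\|f\|_{H^r}\lesssim N^{r/2}$, which yields the same $N^{-r/2}$ rate.
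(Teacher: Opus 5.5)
Your proposal is correct and follows the paper's proof essentially step by step. You isolate the constant Fourier--Laplace coefficient of the Sloan error and identify it as $m_0^{\star,q}(4\pi)^{-1/2}$ times the scalar quadrature error; you bound the $L^\infty$ norm below by $(4\pi)^{-1/2}$ times that coefficient; and you invoke the Hesse--Sloan/Brauchart--Hesse $N^{-r/2}$ lower bound, with your optional bump-on-node-free-caps sketch justifying a result that the paper simply cites.
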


\begin{proof}
Since \(\Mop_{\star,q}\) preserves each spherical harmonic degree and
\((\Id-\Pn)f\) has no constant component, we have
\(
  \ip{\Mop_{\star,q}(\Id-\Pn)f}{Y_{0,1}}=0.
\)
Thus, by \eqref{eq:star-split} and \(Y_{0,1}=(4\pi)^{-1/2}\),
\[
 \ip{u-\mathcal S_q(u_{n,\mathcal Q_{N_n}}^{\star})}{Y_{0,1}}
 =m_0^{\star,q}\ip{\Pn f-\Ln f}{Y_{0,1}} =\frac{m_0^{\star,q}}{\sqrt{4\pi}}
   \left(
     \int_{\Sph}f\,\dd\omega-\mathcal Q_{N_n}(f)
   \right).
\]
For every \(N_n\)-point quadrature rule, the Sobolev quadrature lower bound derived in
\cite{BrauchartHesse2007,HesseSloan2005Lower} provides
\(f_{N_n}\in H^r(\Sph)\) with
\(\|f_{N_n}\|_{H^r}=1\) such that
\[
  \left|
    \int_{\Sph}f_{N_n}\,\dd\omega-\mathcal Q_{N_n}(f_{N_n})
  \right|
  \geq c_rN_n^{-r/2}.
\]
Since \(Y_{0,1}=(4\pi)^{-1/2}\), for \(h\in C(\Sph)\),
\(
  \|h\|_{L^\infty}
  \geq \frac{1}{\sqrt{4\pi}}
  |\ip{h}{Y_{0,1}}|.
\)
Taking \(f=f_{N_n}\) therefore gives
\[
 E_{n,q}^{\star}(\mathcal Q_{N_n})
 \geq
 \|u-\mathcal S_q(u_{n,\mathcal Q_{N_n}}^{\star})\|_{L^\infty} \geq
 \frac{1}{\sqrt{4\pi}}
 \left|
 \ip{u-\mathcal S_q(u_{n,\mathcal Q_{N_n}}^{\star})}{Y_{0,1}}
 \right| \geq
 c_r|m_0^{\star,q}|N_n^{-r/2},
\]
after absorbing the factor \(4\pi\) into \(c_r\).
\end{proof}

\begin{remark}[When the constant mode is active]
Since \(P_0=1\), the Funk--Hecke formula gives
$\mu_0=2\pi\int_{-1}^{1}\Phi(t)\dd t$.
Thus \(\mu_0\ne0\) exactly when the zonal kernel has nonzero mean; in
particular, this holds for every nonzero kernel of one sign.  For the scaled
Laplace single-layer operator \(\Top=\kappa S\) discussed after
Assumption~\ref{ass:operator}, one has \(\mu_0=\kappa\).
By nonresonance, \(1-\mu_0\ne0\).  If \(\gamma_\star>1\), then
\(q+\varepsilon_\star>0\), and hence
\eqref{eq:nonzero-constant-mode} is equivalent to \(\mu_0\ne0\).
The case \((\star,q)=(G,0)\), for which
\(m_0^{G,0}=(1-\mu_0)^{-1}\ne0\), lies outside this regime since
\(\gamma_G=0\). The constant harmonic is convenient because its aliasing coefficient is the
ordinary scalar quadrature error, so the sharp quadrature lower bound applies
directly.  If \(\mu_0=0\), Lemma~\ref{lem:alias-lower} yields no information.
Saturation may still be detected through another fixed harmonic mode, but
this would require a corresponding weighted quadrature lower bound.
\end{remark}

\begin{theorem}[Sharp recovery--saturation dichotomy]
\label{thm:threshold-dichotomy}
Let Assumptions~\ref{ass:operator} and~\ref{ass:exact-order} hold, with $\mu_0\ne0$. Let
\(r>1\), \(q\in\Nzero\), and \(\star\in\{G,N\}\), and suppose that
\(
  \gamma_\star=(q+\varepsilon_\star)\beta>1.
\)
Let \(\{\mathcal Q_{N_n}\}\) be a family of positive \(t_n\)-exact
quadrature rules with
\begin{equation}\label{eq:quasi-optimal-design-regime}
  t_n\ge 2n,
  \qquad
  N_n\le C_1t_n^2.
\end{equation}
and set $s_n:=t_n-n$.
Denote by $E_{n,q}^{\star}:=E_{n,q}^{\star}(\mathcal Q_{N_n})$.
Then, for all sufficiently large \(n\),
\begin{equation}\label{eq:dichotomy-two-sided}
  E_{n,q}^{\star}
  \asymp
  n^{-(r+\gamma_\star-1)}+s_n^{-r}.
\end{equation}
Thus 
\begin{equation}\label{equ:critical}
    s_{n,\star}^{\rm crit} :=n^{(r+\gamma_\star-1)/r}.
\end{equation}
 is the sharp balance scale between recovery
of the spectral tail rate and aliasing saturation.
\end{theorem}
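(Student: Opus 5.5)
The two-sided bound \eqref{eq:dichotomy-two-sided} splits into an upper and a lower estimate, and I treat them separately.

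\emph{Upper bound.} Since \(\gamma_\star>1\), Lemma~\ref{lem:low} gives \(\rho_{\gamma_\star}(n)=1\). Theorem~\ref{thm:Q} then yields
\[
E_{n,q}^\star\le C\bigl(n^{-(r+\gamma_\star-1)}+s_n^{-r}\bigr).
\]

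\emph{Lower bound: aliasing part.} By the remark following Lemma~\ref{lem:alias-lower}, \(\mu_0\ne0\) together with \(q+\varepsilon_\star>0\) gives \(m_0^{\star,q}\ne0\). Lemma~\ref{lem:alias-lower} therefore gives \(E_{n,q}^\star\ge c\,N_n^{-r/2}\). The design assumption \(N_n\le C_1t_n^2\) gives \(N_n^{-r/2}\ge C_1^{-r/2}t_n^{-r}\). Since \(t_n\ge 2n\), we have \(s_n=t_n-n\ge t_n/2\), hence \(t_n\le 2s_n\) and \(t_n^{-r}\ge 2^{-r}s_n^{-r}\). Combining these, \(E_{n,q}^\star\gtrsim s_n^{-r}\). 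The constant is independent of the rule, since \(c_r\) in Lemma~\ref{lem:alias-lower} is.

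\emph{Lower bound: tail part.} This is the step needing care. I would use the test functions \(v_n\) from Proposition~\ref{prop:sharp-tail}, applied to the multiplier \(\Mop_{\star,q}\). By Assumptions~\ref{ass:operator} and~\ref{ass:exact-order}, its symbol satisfies
\[
|m_\ell|=|\mu_\ell|^{q+\varepsilon_\star}/|1-\mu_\ell|\asymp(1+\ell)^{-\gamma_\star}
\]
for \(\ell\ge\ell_0\): the numerator is two-sided by the assumptions, and \(\delta\le|1-\mu_\ell|\le 1+C_{\Top}\). The function \(v_n\) has \(\|v_n\|_{H^r}=1\), is supported in degrees \(n<\ell\le 2n\), and satisfies \(|\Mop_{\star,q}(\Id-\Pn)v_n(x_0)|\gtrsim n^{-(r+\gamma_\star-1)}\).

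The difficulty is the aliasing contribution \(\Mop_{\star,q}(\Pn v_n-\Ln v_n)\), which need not vanish. One might hope it is zero because \(v_n\in\mathbb P_{2n}\), but exactness of degree \(t_n\) only makes \(\Ln\) agree with \(\Pn\) on \(\mathbb P_{t_n-n}\). Since \(t_n-n=s_n\ge n\) only, this does not cover \(\mathbb P_{2n}\) in general. I would handle it as follows.

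\begin{itemize}
\item If \(s_n\ge 2n\), then \(v_n\in\mathbb P_{s_n}\). For \(p\in\mathbb P_{s_n}\) and \(\deg Y\le n\), the product \(pY\) lies in \(\mathbb P_{t_n}\), so \(\Ln v_n=\Pn v_n=0\). The aliasing term vanishes and \(E\gtrsim n^{-(r+\gamma_\star-1)}\).
\item If \(s_n<2n\), then \(s_n^{-r}\ge (2n)^{-r}\ge 2^{-r}n^{-(r+\gamma_\star-1)}\), because \(\gamma_\star>1\). The aliasing lower bound already dominates the tail term.
\end{itemize}

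\emph{Conclusion.} In either case \(E\gtrsim\max\bigl(n^{-(r+\gamma_\star-1)},s_n^{-r}\bigr)\), which is comparable to the sum. This gives \eqref{eq:dichotomy-two-sided}. The two terms balance exactly when \(s_n^{r}=n^{r+\gamma_\star-1}\), which identifies \(s_{n,\star}^{\rm crit}\) in \eqref{equ:critical}.

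The main obstacle is exactly this control of aliasing on the tail test function. The case split resolves it using \(\gamma_\star>1\). In the first case the conclusion only needs \(v_n\in\mathbb P_{t_n-n}\), which is where the degree bound \(2n\) on \(v_n\) enters.
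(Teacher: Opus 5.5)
Your proof is correct. It agrees with the paper on the upper bound and on the saturation lower bound \(E_{n,q}^\star\gtrsim s_n^{-r}\); it differs only in how the tail branch of the lower bound is secured.

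The paper argues at the level of operator norms. It writes \(E_{n,q}^\star=\|A_n+C_n\|\) with \(A_n=\Mop_{\star,q}(\Id-\Pn)\) and \(C_n=\Mop_{\star,q}(\Pn-\Ln)\). It uses \(\|A_n\|\ge c_1n^{-(r+\gamma_\star-1)}\) from Proposition~\ref{prop:sharp-tail} and the aliasing upper bound \(\|C_n\|\le C_2s_n^{-r}\) from Lemmas~\ref{lem:low} and~\ref{lem:alias-source}. It then splits according to whether \(s_n^{-r}\le c_1n^{-(r+\gamma_\star-1)}/(2C_2)\), and in the tail-dominated case applies the reverse triangle inequality.

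You instead use the band-limited structure of the extremal function \(v_n\), which lives in degrees \((n,2n]\). When \(t_n\ge3n\), exactness forces \(\Ln v_n=\Pn v_n=0\), so the aliasing term vanishes identically on that test function. When \(t_n<3n\), the condition \(\gamma_\star>1\) gives \(s_n^{-r}\gtrsim n^{-r}\ge n^{-(r+\gamma_\star-1)}\), so the saturation lower bound already dominates.

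What your route buys:
\begin{itemize}
\item The lower bound becomes more elementary. It never uses the aliasing upper bound, and hence not the external hyperinterpolation estimate behind Lemma~\ref{lem:alias-source}.
\item The case threshold \(s_n=2n\) is explicit rather than set by unspecified constants.
\item It gives a slightly finer structural fact: the tail rate is attained with zero aliasing as soon as \(t_n\ge3n\).
\end{itemize}

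What the paper's route buys: it needs no information about the form of near-extremal tail functions, only the two norm estimates. It would therefore carry over to settings where exactness does not annihilate those functions, such as the Gram-corrected operator of Section~\ref{sec:inexact}, provided \(\|C_n\|\) is suitably small there.
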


\begin{proof}
Let $a_n:=n^{-(r+\gamma_\star-1)}$ and
$b_n:=s_n^{-r}$.
Since \(\gamma_\star>1\), Theorem~\ref{thm:Q} gives
\begin{equation}\label{eq:dichotomy-upper}
  E_{n,q}^{\star}\le C(a_n+b_n).
\end{equation}
Under Assumption~\ref{ass:exact-order}, since $|\mu_\ell|\asymp(1+\ell)^{-\beta}$
and $\delta\le |1-\mu_\ell|\le1+C_{\Top}$,
$ |{\mu_\ell^{q+\varepsilon_\star}}/{1-\mu_\ell}|
  \asymp
  (1+\ell)^{-\gamma_\star}$
  as
  $\ell\to\infty$.
Hence Proposition~\ref{prop:sharp-tail} shows that
\begin{equation}\label{eq:dichotomy-tail-lower}
  \|\Mop_{\star,q}(\Id-\Pn)\|_{H^r\to L^\infty}
  \ge c_1a_n
\end{equation}
for all sufficiently large \(n\). We define
$A_n:=\Mop_{\star,q}(\Id-\Pn)$
and $C_n:=\Mop_{\star,q}(\Pn-\Ln)$
as operators from \(H^r(\Sph)\) to \(L^\infty(\Sph)\).  By
\eqref{eq:star-split},
\(
  E_{n,q}^{\star}
  =\|A_n+C_n\|_{H^r\to L^\infty}.
\)
The aliasing estimate used in Theorem~\ref{thm:Q}, together with
\(\gamma_\star>1\), gives
\begin{equation}\label{eq:dichotomy-alias-upper}
  \|C_n\|_{H^r\to L^\infty}\le C_2b_n.
\end{equation}
On the other hand, Lemma~\ref{lem:alias-lower} and
\(N_n\le C_1t_n^2\) yield
\(
  E_{n,q}^{\star}
  \ge c_r|m_0^{\star,q}|N_n^{-r/2}
  \gtrsim t_n^{-r}.
\)
Since \(t_n\ge2n\) and \(s_n=t_n-n\), we have
\(
{t_n}/{2}\le s_n\le t_n,
\)
and therefore \(t_n^{-r}\asymp s_n^{-r}=b_n\).  Thus
\begin{equation}\label{eq:dichotomy-alias-lower}
  E_{n,q}^{\star}\ge c_3b_n.
\end{equation}
It remains to combine the two lower bounds.  If
\(
  b_n\le {c_1a_n}/{(2C_2)}
\)
then the reverse triangle inequality and
\eqref{eq:dichotomy-tail-lower}--\eqref{eq:dichotomy-alias-upper} give
\(
  E_{n,q}^{\star}
  \ge \|A_n\|-\|C_n\|
  \ge c_1a_n-C_2b_n
  \ge c_1a_n/2.
\)
In this case \(a_n+b_n\lesssim a_n\), so
\(E_{n,q}^{\star}\gtrsim a_n+b_n\). Otherwise, if
\(
  a_n<{2C_2b_n}/{c_1},
\)
we have \(a_n+b_n\lesssim b_n\).  By
\eqref{eq:dichotomy-alias-lower},
\(
  E_{n,q}^{\star}\ge c_3b_n
  \gtrsim a_n+b_n.
\)
Thus in either case
\[
  E_{n,q}^{\star}\gtrsim a_n+b_n.
\]
Together with \eqref{eq:dichotomy-upper}, this proves
\eqref{eq:dichotomy-two-sided}.
Finally, $a_n=b_n$ gives $ s_n=n^{(r+\gamma_\star-1)/r}$, the  threshold \eqref{equ:critical}.
\end{proof}

\begin{remark}[The two branches under weaker assumptions]
\label{rem:weaker-dichotomy}
The two branches of Theorem~\ref{thm:threshold-dichotomy} require weaker
assumptions when considered separately.  Under
Assumption~\ref{ass:operator} alone, if
\(s_n\gtrsim s_{n,\star}^{\rm crit}\), then
Theorem~\ref{thm:Q} gives
\(
  E_{n,q}^{\star}
  \lesssim
  n^{-(r+\gamma_\star-1)}.
\)
Likewise, if \(\mu_0\ne0\) and \(N_n\lesssim t_n^2\), then
\(E_{n,q}^{\star}\asymp s_n^{-r}\) whenever
\(s_n\lesssim s_{n,\star}^{\rm crit}\), without Assumption~\ref{ass:exact-order}.  The exact-order condition is
needed to make the spectral-tail branch sharp and hence to obtain the global
two-sided estimate \eqref{eq:dichotomy-two-sided}.
\end{remark}

\begin{remark}[Relation to the literature]
Quadrature conditions for preserving superconvergence are classical.  In the
spline setting of Atkinson and Bogomolny, for example, the once-iterated
discrete Galerkin solution satisfies a bound of
the form
\(
 \|x-z_h^*\|\leq C\bigl(h^{d+1}+h^{2r}\bigr),
\)
where $d$ is the degree of precision of the local quadrature; recovering the
full one-step rate $h^{2r}$ therefore requires $d\geq2r-1$
\cite{AtkinsonBogomolny1987}; see also \cite{GolbergBowman1998}.  Re-iterated
and repeated methods are studied in
\cite{LongNelakanti2007,PorterStirling1993}.

For the spherical multiplier problem, the critical scale
\(
 s_{n,\star}^{\rm crit}
 =n^{1+\{(q+\varepsilon_\star)\beta-1\}/r}
\)
displays the dependence on the Sloan depth and the exact smoothing order.
Proposition~\ref{prop:sharp-tail} shows that the ideal tail rate has the
stated order, while the constant-mode argument turns the Sobolev quadrature
lower bound into the matching obstruction.  Thus the sufficient scale is
sharp under the exact-order, nondegeneracy, and cardinality assumptions. 

\end{remark}

\begin{remark}[Admissible exact rules]
Quadrature rules satisfying \eqref{eq:quasi-optimal-design-regime} exist.
Indeed, equal-weight spherical \(t_n\)-designs can be chosen with
\(N_n\asymp t_n^2\): the classical lower bound is due to \cite{delsarte1991geometriae}, while
the matching existence bound can be found in \cite{bondarenko2013optimal}.
\end{remark}

\section{Sloan iteration with inexact quadrature}
\label{sec:inexact}
Section~\ref{sec:discrete} used polynomial exactness to construct $\mathcal{L}_n$ onto \(\mathbb P_n\).  We now consider a family
of positive quadrature rules \(\{\mathcal Q_{N_n}\}_{n\geq1}\) without
assuming polynomial exactness.  For fixed \(n\), we denote by
\(\mathcal Q=\mathcal Q_{N_n}\) for simplicity.  We assume the Marcinkiewicz--Zygmund
condition
\begin{equation}\label{eq:MZ-condition}
 (1-\eta_n)\|p\|_{L^2}^2
 \leq \mathcal Q_{N_n}(|p|^2)
 \leq (1+\eta_n)\|p\|_{L^2}^2,
 \qquad p\in\mathbb P_n,\qquad 0\leq\eta_n<1.
\end{equation}
Such inequalities provide the discrete norm equivalence underlying hyperinterpolation, stable
spherical least squares and quadrature-based Galerkin methods; see, e.g.,
\cite{an2022quadrature,an2024bypassing,MR4780349,filbir2011marcinkiewicz,MR4124840,LuWang2023,
Mhaskar2006Weighted,mhaskar2001spherical,Wu2026MZGalerkin}. Let \(\{Y_a\}_{a=1}^{(n+1)^2}\) be an orthonormal basis of \(\mathbb P_n\),
and define the Gram matrix $G_n\in\mathbb{R}^{(n+1)^2\times (n+1)^2}$ by
$(G_n)_{ab}:=\mathcal Q_{N_n}(Y_aY_b)$.
For \(p=\sum_a c_aY_a\), \eqref{eq:MZ-condition} is equivalent to
\[
  (1-\eta_n)\|c\|_{\ell^2}^2
  \leq c^{\mathsf T}G_nc
  \leq(1+\eta_n)\|c\|_{\ell^2}^2.
\]
Equivalently, the spectrum $\sigma(G_n)$ of $G_n$ satisfies $\sigma(G_n)\subset[1-\eta_n,1+\eta_n]$, and the condition number of $G_n$ satisfies $ \kappa_2(G_n)\leq(1+\eta_n)/(1-\eta_n)$. In particular, $\eta_n<1$ makes $G_n$ positive definite.

When \(G_n\neq I\), the hyperinterpolation operator
\eqref{eq:hyperinterpolation} need not reproduce \(\mathbb P_n\).  We replace
it by the weighted least-squares projector
\begin{equation*}
 \widetilde{\mathcal L}_{n,\mathcal Q}v
 :=\underset{p\in\mathbb P_n}{\operatorname{argmin}}
   \sum_{j=1}^{N_n}w_{j,n}|v(x_{j,n})-p(x_{j,n})|^2.
\end{equation*}
The lower bound in \eqref{eq:MZ-condition} ensures uniqueness, and the normal
equations are
\begin{equation}\label{eq:MZ-projection}
 \Qip{\widetilde{\mathcal L}_{n,\mathcal Q}v}{p}
 =\Qip{v}{p},
 \qquad p\in\mathbb P_n.
\end{equation}
Since the least-squares residual vanishes for \(v\in\mathbb P_n\),
\(\widetilde{\mathcal L}_{n,\mathcal Q}\) reproduces \(\mathbb P_n\), and
therefore
\(
  \widetilde{\mathcal L}_{n,\mathcal Q}^2
  =\widetilde{\mathcal L}_{n,\mathcal Q}.
\)
If \(\mathcal Q_{N_n}\) is exact on \(\mathbb P_{2n}\), then \(G_n=I\) and
\(\widetilde{\mathcal L}_{n,\mathcal Q}\) reduces to the usual
hyperinterpolant \(\Ln\).

\begin{lemma}[Weighted least-squares approximation]
\label{lem:MZ-least-squares}
Suppose that \eqref{eq:MZ-condition} holds.  If $v\in H^r(\Sph)$ with
$r>1$, then
\begin{equation*}
 \|\Pn v-\widetilde{\mathcal L}_{n,\mathcal Q}v\|_{L^2}
 \leq \|v-\widetilde{\mathcal L}_{n,\mathcal Q}v\|_{L^2} \leq C
 \left[1+\left(\frac{1+\eta_n}{1-\eta_n}\right)^2\right]^{1/2}
 n^{-r}\|v\|_{H^r}.
\end{equation*}
The constant $C$ is independent of $n$, $v$, and $\eta_n$.
\end{lemma}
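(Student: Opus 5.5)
The first inequality is immediate. Since $\widetilde{\mathcal L}_{n,\mathcal Q}v\in\mathbb P_n$, we have $\Pn v-\widetilde{\mathcal L}_{n,\mathcal Q}v=\Pn(v-\widetilde{\mathcal L}_{n,\mathcal Q}v)$, and $\Pn$ is an $L^2$-orthogonal projection, hence a contraction.

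For the second inequality I will use a quasi-optimality argument in the discrete seminorm. Fix an auxiliary near-best polynomial $p^*\in\mathbb P_n$ that approximates $v$ well in the uniform norm. Concretely, take a filtered (de la Vallée Poussin type) polynomial of degree $n$, or the best uniform approximation, so that
\[
  \|v-p^*\|_{L^\infty}\le C n^{-(r-1)}\|v\|_{H^r},
  \qquad
  \|v-p^*\|_{L^2}\le Cn^{-r}\|v\|_{H^r}.
\]
Both estimates are standard in spherical approximation theory. The uniform one alone would lose a power of $n$, so I will instead use a sharper sampling estimate, described below.

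Split the error as
\[
  v-\widetilde{\mathcal L}_{n,\mathcal Q}v=(v-p^*)+(p^*-\widetilde{\mathcal L}_{n,\mathcal Q}v).
\]
The second piece lies in $\mathbb P_n$. By reproduction, $p^*-\widetilde{\mathcal L}_{n,\mathcal Q}v=\widetilde{\mathcal L}_{n,\mathcal Q}(p^*-v)$. The normal equations \eqref{eq:MZ-projection} make $\widetilde{\mathcal L}_{n,\mathcal Q}$ the $\Qip{\cdot}{\cdot}$-orthogonal projection, so its discrete seminorm is at most $\mathcal Q(|p^*-v|^2)^{1/2}$. The lower MZ bound in \eqref{eq:MZ-condition} then gives
\[
  \|p^*-\widetilde{\mathcal L}_{n,\mathcal Q}v\|_{L^2}\le (1-\eta_n)^{-1/2}\,\mathcal Q(|v-p^*|^2)^{1/2}.
\]
Squaring and adding the two pieces yields a bound of the form $\bigl[\|v-p^*\|_{L^2}^2+(1-\eta_n)^{-1}\mathcal Q(|v-p^*|^2)\bigr]^{1/2}$. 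This already has the structure $[1+\kappa^2]^{1/2}$ claimed in the statement. A cruder route that passes through a Gram-matrix inverse and the upper MZ constant produces exactly the factor $\bigl((1+\eta_n)/(1-\eta_n)\bigr)^2$, which is harmless to accept.

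\textbf{Main obstacle.} The crux is bounding the sampled error $\mathcal Q(|v-p^*|^2)$ by $Cn^{-2r}\|v\|_{H^r}^2$ with $C$ independent of $\eta_n$. I would decompose $v-p^*=\sum_{j\ge0}(\mathcal V_{2^{j+1}n}v-\mathcal V_{2^jn}v)$ into dyadic filtered blocks. Each block is a polynomial of degree $2^{j+2}n$, and its $L^2$ norm is $O((2^jn)^{-r}\|v\|_{H^r})$.

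On each block I would apply the upper MZ-type inequality at the higher degree $2^{j+2}n$. For positive weights satisfying \eqref{eq:MZ-condition} at degree $n$, the standard cubature-measure argument (as in \cite{filbir2011marcinkiewicz,mhaskar2001spherical} and the treatment in \cite{LuWang2023}, Section~2.5) gives
\[
  \mathcal Q(|p|^2)\le C(1+\eta_n)(m/n)^2\|p\|_{L^2}^2
  \qquad\text{for } p\in\mathbb P_m,\ m\ge n.
\]
Summing over blocks, the geometric factor $2^{j}$ is absorbed by $2^{-jr}$ because $r>1$. The resulting $(1+\eta_n)$ factor, combined with $(1-\eta_n)^{-1}$, gives the stated constant. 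I expect most of the care to go into this higher-degree MZ transfer; if needed, I would simply cite \cite[Theorem~1.2]{LuWang2023} for the sampled tail bound.
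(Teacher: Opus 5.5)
Your proof is correct, and it takes a genuinely different route from the paper: you prove the key estimate, whereas the paper only cites it.

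\textbf{The paper's route.} The paper gets the first inequality from the Pythagorean identity $\|v-\widetilde{\mathcal L}_{n,\mathcal Q}v\|_{L^2}^2=\|v-\Pn v\|_{L^2}^2+\|\Pn v-\widetilde{\mathcal L}_{n,\mathcal Q}v\|_{L^2}^2$, which is equivalent to your contraction argument. For the second inequality it simply invokes the optimal weighted least-squares estimate \cite[Theorem~1.2]{LuWang2023} together with $\kappa_2(G_n)\le(1+\eta_n)/(1-\eta_n)$. The factor $\bigl[1+\theta_n^2\bigr]^{1/2}$, with $\theta_n:=(1+\eta_n)/(1-\eta_n)$, is inherited from that result.

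\textbf{Your route.} You use quasi-optimality of the $\Qip{\cdot}{\cdot}$-orthogonal projection and the lower MZ bound to reduce everything to the sampled tail $\mathcal Q(|v-p^*|^2)$. You then control that tail by a dyadic filtered decomposition combined with the higher-degree upper MZ inequality $\mathcal Q(|p|^2)\le C(1+\eta_n)(m/n)^2\|p\|_{L^2}^2$ for $p\in\mathbb P_m$, $m\ge n$. The resulting series $\sum_j 2^{j(1-r)}$ converges exactly because $r>1$.

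The steps check out. Two small bookkeeping points:
\begin{enumerate}[label=\textup{(\roman*)}]
\item $v-p^*$ and $p^*-\widetilde{\mathcal L}_{n,\mathcal Q}v$ are not $L^2$-orthogonal, so ``squaring and adding'' really means the triangle inequality, which costs a harmless factor $\sqrt2$.
\item The filter must be normalized so that $p^*\in\mathbb P_n$, as the reproduction step $p^*-\widetilde{\mathcal L}_{n,\mathcal Q}v=\widetilde{\mathcal L}_{n,\mathcal Q}(p^*-v)$ requires. Your stated block degree $2^{j+2}n$ suggests a filter supported on $[0,2]$, which would put $p^*$ in $\mathbb P_{2n}$.
\end{enumerate}

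\textbf{What each approach buys.} Your route is transparent and gives a better constant. It yields $C\bigl[1+\theta_n\bigr]^{1/2}n^{-r}\|v\|_{H^r}$, which grows like $\theta_n^{1/2}$ rather than $\theta_n$ as $\eta_n\to1$; since $\theta_n\ge1$, this implies the stated bound. The cost is that you must supply the measure-regularity and higher-degree MZ transfer lemma. That is a localized-kernel argument in the style of Filbir--Mhaskar, which the paper's one-line citation keeps hidden inside Lu--Wang's theorem.
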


\begin{proof}
Since $v-\Pn v$ is orthogonal to $\mathbb P_n$ and
$\Pn v-\widetilde{\mathcal L}_{n,\mathcal Q}v\in\mathbb P_n$,
\[
 \|v-\widetilde{\mathcal L}_{n,\mathcal Q}v\|_{L^2}^2
 =\|v-\Pn v\|_{L^2}^2
  +\|\Pn v-\widetilde{\mathcal L}_{n,\mathcal Q}v\|_{L^2}^2.
\]
This proves the first inequality.  The second is the optimal weighted
least-squares estimate on the sphere
\cite[Theorem~1.2]{LuWang2023}, together with
$\kappa_2(G_n)\leq(1+\eta_n)/(1-\eta_n)$.
\end{proof}

Fix $r>1$ and $f\in H^r(\Sph)$. We define the Gram-corrected discrete Galerkin solution
$\widetilde u_{n,\mathcal Q}^G\in\mathbb P_n$ and the corresponding
product-integration Nystr\"om solution
$\widetilde u_{n,\mathcal Q}^N\in H^r(\Sph)$ by
\begin{equation}\label{eq:MZ-Galerkin}
 \Qip{(\Id-\Top)\widetilde u_{n,\mathcal Q}^G}{p}
 =\Qip{f}{p}\quad \forall p\in\mathbb P_n,
 \end{equation}
and
\begin{equation}\label{eq:MZ-Nystrom}
 (\Id-\Top\widetilde{\mathcal L}_{n,\mathcal Q})
 \widetilde u_{n,\mathcal Q}^N
 =f,
\end{equation}
respectively.

\begin{proposition}[Gram-corrected solution]
\label{prop:MZ-representations}
Suppose that Assumption~\ref{ass:operator} holds and the quadrature family
satisfies \eqref{eq:MZ-condition}.  Let \(r>1\).  For every
\(f\in H^r(\Sph)\), problem~\eqref{eq:MZ-Galerkin} has a unique solution in
\(\mathbb P_n\).  Moreover,
\(\Id-\Top\widetilde{\mathcal L}_{n,\mathcal Q}\) is boundedly invertible on
\(H^r(\Sph)\), so \eqref{eq:MZ-Nystrom} also has a unique solution.  The two
solutions satisfy
\begin{equation}\label{eq:MZ-closed-forms}
 \widetilde u_{n,\mathcal Q}^G
 =\Rop\widetilde{\mathcal L}_{n,\mathcal Q}f,
 \qquad
 \widetilde u_{n,\mathcal Q}^N
 =f+\Bop\widetilde{\mathcal L}_{n,\mathcal Q}f.
\end{equation}
Consequently,
\begin{equation}\label{eq:MZ-direct-errors}
 u-\widetilde u_{n,\mathcal Q}^G
 =\Rop(\Id-\widetilde{\mathcal L}_{n,\mathcal Q})f,
 \qquad
 u-\widetilde u_{n,\mathcal Q}^N
 =\Bop(\Id-\widetilde{\mathcal L}_{n,\mathcal Q})f.
\end{equation}
\end{proposition}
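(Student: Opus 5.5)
The proof follows Propositions~\ref{prop:QG-formula} and~\ref{prop:N-formula}, with the hyperinterpolant replaced by the projector \(\widetilde{\mathcal L}_{n,\mathcal Q}\). The only structural facts needed about \(\widetilde{\mathcal L}_{n,\mathcal Q}\) are the normal equations \eqref{eq:MZ-projection}, that its range is \(\mathbb P_n\), and that it reproduces \(\mathbb P_n\) (it is idempotent). Polynomial exactness is never used.

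\emph{Galerkin part.} Let \(\widetilde u\in\mathbb P_n\) solve \eqref{eq:MZ-Galerkin} and set \(w:=(\Id-\Top)\widetilde u\). Since \(\Top\mathbb P_n\subseteq\mathbb P_n\), we have \(w\in\mathbb P_n\), and by reproduction \(w=\widetilde{\mathcal L}_{n,\mathcal Q}w\). Combining \eqref{eq:MZ-projection} (applied to \(f\)) with \eqref{eq:MZ-Galerkin} gives
\[
\Qip{w-\widetilde{\mathcal L}_{n,\mathcal Q}f}{p}=0\qquad\text{for all } p\in\mathbb P_n .
\]
The difference \(w-\widetilde{\mathcal L}_{n,\mathcal Q}f\) lies in \(\mathbb P_n\), so we may take \(p\) equal to it. The lower Marcinkiewicz--Zygmund bound in \eqref{eq:MZ-condition} then forces its \(L^2\) norm to vanish; equivalently, \(G_n\) is positive definite. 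Hence \(w=\widetilde{\mathcal L}_{n,\mathcal Q}f\), and so \(\widetilde u=\Rop\widetilde{\mathcal L}_{n,\mathcal Q}f\), because \(\Rop\) preserves \(\mathbb P_n\). This gives uniqueness. For existence, note that \(\Rop\widetilde{\mathcal L}_{n,\mathcal Q}f\in\mathbb P_n\) satisfies \eqref{eq:MZ-Galerkin} by \eqref{eq:MZ-projection}. Writing \(u=\Rop f\) yields the first identity in \eqref{eq:MZ-direct-errors}.

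\emph{Nystr\"om part.} Since \(\operatorname{ran}\widetilde{\mathcal L}_{n,\mathcal Q}=\mathbb P_n\) and \(\Top,\Bop\) preserve \(\mathbb P_n\), reproduction gives
\[
\widetilde{\mathcal L}_{n,\mathcal Q}\Bop\widetilde{\mathcal L}_{n,\mathcal Q}=\Bop\widetilde{\mathcal L}_{n,\mathcal Q},\qquad
\widetilde{\mathcal L}_{n,\mathcal Q}\Top\widetilde{\mathcal L}_{n,\mathcal Q}=\Top\widetilde{\mathcal L}_{n,\mathcal Q}.
\]
The computation with \eqref{eq:B-identities} in the proof of Proposition~\ref{prop:N-formula} then carries over verbatim:
\[
(\Id-\Top\widetilde{\mathcal L}_{n,\mathcal Q})(\Id+\Bop\widetilde{\mathcal L}_{n,\mathcal Q})=\Id+(\Bop-\Top-\Top\Bop)\widetilde{\mathcal L}_{n,\mathcal Q}=\Id,
\]
and likewise in the reverse order, using \(\Bop=\Top+\Bop\Top\). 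This yields the second closed form and the second error identity, since \(u=f+\Bop f\).

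\emph{Boundedness on \(H^r\).} It remains to show that \(\widetilde{\mathcal L}_{n,\mathcal Q}:H^r(\Sph)\to\mathbb P_n\subset H^r(\Sph)\) is bounded for fixed \(n\). The embedding \(H^r\hookrightarrow C(\Sph)\) for \(r>1\) makes the point values \(v(x_{j,n})\), and hence the data \(\Qip{v}{Y_a}\), bounded linear functionals. The coefficients of \(\widetilde{\mathcal L}_{n,\mathcal Q}v\) are \(G_n^{-1}\) applied to this data vector, and \(\|G_n^{-1}\|\le(1-\eta_n)^{-1}\). All norms on the finite-dimensional space \(\mathbb P_n\) are equivalent, so \(\widetilde{\mathcal L}_{n,\mathcal Q}\) is bounded, and therefore \(\Id+\Bop\widetilde{\mathcal L}_{n,\mathcal Q}\) is a bounded two-sided inverse on \(H^r\).

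I expect no real obstacle. The only point requiring care is that uniqueness comes from the Marcinkiewicz--Zygmund lower bound rather than from exactness. This enters twice: in deducing \(w=\widetilde{\mathcal L}_{n,\mathcal Q}f\), and in the well-definedness and reproduction property of \(\widetilde{\mathcal L}_{n,\mathcal Q}\).
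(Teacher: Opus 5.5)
Your proposal is correct and essentially matches the paper's argument. The Galerkin part (testing with $p=w-\widetilde{\mathcal L}_{n,\mathcal Q}f$ and using the lower Marcinkiewicz--Zygmund bound) and the boundedness argument via $H^r(\Sph)\hookrightarrow C(\Sph)$ are the same as in the paper. The one small difference is in the Nystr\"om part: you verify the two-sided inverse $\Id+\Bop\widetilde{\mathcal L}_{n,\mathcal Q}$ directly, as in Proposition~\ref{prop:N-formula}. The paper instead applies $\widetilde{\mathcal L}_{n,\mathcal Q}$ to the equation, solves $(\Id-\Top)\widetilde{\mathcal L}_{n,\mathcal Q}\widetilde u=\widetilde{\mathcal L}_{n,\mathcal Q}f$, back-substitutes, and then checks the converse; both routes rest on the same facts (reproduction of $\mathbb P_n$, range in $\mathbb P_n$, and invariance of $\mathbb P_n$ under $\Top$ and $\Bop$).
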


\begin{proof}
Denote by \(
\widetilde{\mathcal L}
=\widetilde{\mathcal L}_{n,\mathcal Q}.
\)
For the discrete Galerkin problem, let
\(
w=(\Id-\Top)\widetilde u_{n,\mathcal Q}^G.
\)
Since \(\Top\mathbb P_n\subseteq\mathbb P_n\), both \(w\) and
\(\widetilde{\mathcal L}f\) belong to \(\mathbb P_n\).  By
\eqref{eq:MZ-Galerkin} and the normal equations
\eqref{eq:MZ-projection}, we have $\Qip{w-\widetilde{\mathcal L}f}{p}=0$
for all $p\in\mathbb P_n$.
Taking \(p=w-\widetilde{\mathcal L}f\) and using the lower
Marcinkiewicz--Zygmund bound gives
$w=\widetilde{\mathcal L}f$.
Hence any solution must satisfy
\(
  \widetilde u_{n,\mathcal Q}^G
  =\Rop\widetilde{\mathcal L}f.
\)
Conversely, \(\Rop\widetilde{\mathcal L}f\in\mathbb P_n\) and satisfies
\eqref{eq:MZ-Galerkin}, proving existence and uniqueness.

For the product-integration Nystr\"om problem, let
\(\widetilde u=\widetilde u_{n,\mathcal Q}^N\).  Since
\(\widetilde{\mathcal L}\) reproduces \(\mathbb P_n\) and
\(\Top\widetilde{\mathcal L}\) takes values in \(\mathbb P_n\),
applying \(\widetilde{\mathcal L}\) to
\(
  (\Id-\Top\widetilde{\mathcal L})\widetilde u=f
\)
gives
\(
  (\Id-\Top)\widetilde{\mathcal L}\widetilde u
  =\widetilde{\mathcal L}f.
\)
Thus
\(
  \widetilde{\mathcal L}\widetilde u
  =\Rop\widetilde{\mathcal L}f,
\)
and substitution into the original equation yields
\(
  \widetilde u
  =f+\Top\Rop\widetilde{\mathcal L}f
  =f+\Bop\widetilde{\mathcal L}f.
\)
Conversely, this function satisfies the Nystr\"om equation, so the solution is
unique and
\(
  (\Id-\Top\widetilde{\mathcal L})^{-1}
  =\Id+\Bop\widetilde{\mathcal L}.
\)
Since \(r>1\) implies \(H^r(\Sph)\hookrightarrow C(\Sph)\),
\(\widetilde{\mathcal L}:H^r(\Sph)\to\mathbb P_n\) is bounded, and hence so
is this inverse on \(H^r(\Sph)\).

Finally, subtracting the two representations from
\(u=\Rop f=f+\Bop f\) gives \eqref{eq:MZ-direct-errors}.
\end{proof}

\begin{theorem}[Sloan estimates with inexact quadrature]
\label{thm:MZ}
Under Assumption~\ref{ass:operator}, suppose the quadrature family satisfies
\eqref{eq:MZ-condition}.  Fix $q\in\Nzero$ and $\star\in\{G,N\}$.  For
$f\in H^r(\Sph)$ with $r>1$, set
$\gamma_\star=(q+\varepsilon_\star)\beta$.  Then
\begin{equation}\label{eq:MZ-Sloan-rate}
 \|u-\mathcal S_q(
   \widetilde u_{n,\mathcal Q}^{\star})\|_{L^\infty}
 \leq C\left\{n^{-(r+\gamma_\star-1)}
   +\rho_{\gamma_\star}(n)
    \left[1+\left(\frac{1+\eta_n}{1-\eta_n}\right)^2\right]^{1/2}
    n^{-r}\right\}\|f\|_{H^r}.
\end{equation}
The constant $C$ is independent of $n$, $f$, and $\eta_n$.
\end{theorem}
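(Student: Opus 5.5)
The plan is to mirror the proof of Theorem~\ref{thm:Q}, with $\Ln$ replaced by $\widetilde{\mathcal L}_{n,\mathcal Q}$. First I would derive the Sloan error representation. Proposition~\ref{prop:MZ-representations} gives the initial errors $\Rop(\Id-\widetilde{\mathcal L}_{n,\mathcal Q})f$ and $\Bop(\Id-\widetilde{\mathcal L}_{n,\mathcal Q})f$. Applying the Sloan identity \eqref{eq:error-intro} and using that $\Top^q\Rop$ and $\Top^q\Bop$ have multipliers $\mu_\ell^{q+\varepsilon_\star}/(1-\mu_\ell)$ yields
\[
 u-\mathcal S_q(\widetilde u_{n,\mathcal Q}^{\star})
 =\Mop_{\star,q}(\Id-\widetilde{\mathcal L}_{n,\mathcal Q})f,
\]
with $\Mop_{\star,q}$ as in \eqref{eq:paired-notation}. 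I would then split this exactly as in \eqref{eq:star-split}:
\[
 \Mop_{\star,q}(\Id-\Pn)f+\Mop_{\star,q}\bigl(\Pn f-\widetilde{\mathcal L}_{n,\mathcal Q}f\bigr).
\]
This split uses only that $\widetilde{\mathcal L}_{n,\mathcal Q}f\in\mathbb P_n$, which holds by construction. No polynomial exactness is needed.

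The second step bounds the spectral tail. Assumption~\ref{ass:operator} gives $|\mu_\ell^{q+\varepsilon_\star}/(1-\mu_\ell)|\le C_{\Top}^{q+\varepsilon_\star}\delta^{-1}(1+\ell)^{-\gamma_\star}$. Lemma~\ref{lem:tail} with $\gamma=\gamma_\star$ then applies, since $r>1$ forces $r+\gamma_\star>1$. This produces the term $n^{-(r+\gamma_\star-1)}\|f\|_{H^r}$ with a constant independent of $\eta_n$.

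The third step bounds the low-degree term. Since $\Pn f-\widetilde{\mathcal L}_{n,\mathcal Q}f\in\mathbb P_n$, Lemma~\ref{lem:low} bounds its filtered $L^\infty$ norm by $C\rho_{\gamma_\star}(n)\|\Pn f-\widetilde{\mathcal L}_{n,\mathcal Q}f\|_{L^2}$. Lemma~\ref{lem:MZ-least-squares} then bounds this $L^2$ norm by the factor $[1+((1+\eta_n)/(1-\eta_n))^2]^{1/2}n^{-r}\|f\|_{H^r}$. Adding the two bounds gives \eqref{eq:MZ-Sloan-rate}.

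The main point needing care is the uniformity of $C$ in $\eta_n$. The constants in Lemmas~\ref{lem:tail} and~\ref{lem:low} depend only on the multiplier bound, and hence only on $C_{\Top}$, $\delta$, $q$, $\beta$ and $r$. The constant in Lemma~\ref{lem:MZ-least-squares} is stated to be independent of $\eta_n$, so all $\eta_n$-dependence is carried by the explicit factor. Lemma~\ref{lem:low} requires $n\ge2$; for $n=1$ I would absorb the finitely many small cases into $C$ using the crude bound for $\mathbb P_1$.
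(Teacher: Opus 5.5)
Your proposal is correct and follows the paper's proof essentially verbatim. You use the same four steps: the Sloan identity applied to the representations of Proposition~\ref{prop:MZ-representations}, the tail--aliasing split, Lemma~\ref{lem:tail} for $\Mop_{\star,q}(\Id-\Pn)f$, and Lemmas~\ref{lem:low} and~\ref{lem:MZ-least-squares} for the low-degree term. Your remarks on the $\eta_n$-uniformity of $C$ and on the case $n=1$ are minor points that the paper leaves implicit.
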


\begin{proof}
Applying the Sloan error identity to \eqref{eq:MZ-direct-errors} gives
$u-\mathcal S_q(\widetilde u_{n,\mathcal Q}^{\star})
 =\Mop_{\star,q}(\Id-\Pn)f
  +\Mop_{\star,q}
   (\Pn f-\widetilde{\mathcal L}_{n,\mathcal Q}f)$.
Lemma~\ref{lem:tail} bounds the first term in the error decomposition by
$Cn^{-(r+\gamma_\star-1)}\|f\|_{H^r}$, while
Lemmas~\ref{lem:MZ-least-squares} and \ref{lem:low} bound the second by
\[
 C\rho_{\gamma_\star}(n)
 \left[1+\left(\frac{1+\eta_n}{1-\eta_n}\right)^2\right]^{1/2}
 n^{-r}\|f\|_{H^r}.
\]
Their sum is \eqref{eq:MZ-Sloan-rate}.
\end{proof}
\begin{remark}[Marcinkiewicz--Zygmund stability and saturation]
The saturation conclusion of Theorem~\ref{thm:threshold-dichotomy} extends
to Gram-corrected inexact quadrature. We define
\(
  \widetilde{\mathcal Q}_{n,N_n}(f)
  :=\sqrt{4\pi}\,
    \ip{\widetilde{\mathcal L}_{n,\mathcal Q}f}{Y_{0,1}}.
\)
This is a linear quadrature rule based on the \(N_n\) sampled values of \(f\),
and
\[
  \ip{\Pn f-\widetilde{\mathcal L}_{n,\mathcal Q}f}{Y_{0,1}}
  =
  \frac{1}{\sqrt{4\pi}}
  \left(
    \int_{\Sph}f\dd\omega
    -\widetilde{\mathcal Q}_{n,N_n}(f)
  \right).
\]
Hence the general \(N_n\)-point Sobolev quadrature lower bound
\cite{HesseSloan2005Lower}, together with the constant-mode argument of
Lemma~\ref{lem:alias-lower}, gives
\[
  \sup_{0\ne f\in H^r(\Sph)}
  \frac{
    \|u-\mathcal S_q(\widetilde u_{n,\mathcal Q}^{\star})\|_{L^\infty}
  }{
    \|f\|_{H^r}
  }
  \geq
  c_r|m_0^{\star,q}|N_n^{-r/2}.
\]
Therefore, if \(\gamma_\star>1\), \(\mu_0\ne0\),
\(\sup_n\eta_n<1\), and \(N_n\leq Cn^2\), Theorem~\ref{thm:MZ} yields the
sharp worst-case rate \(n^{-r}\).  Thus Marcinkiewicz--Zygmund-stable sampling of quasi-optimal
cardinality preserves the saturation branch, but Marcinkiewicz--Zygmund stability alone does not
imply recovery of the quadrature-free rate; that requires additional control
of \(\Pn f-\widetilde{\mathcal L}_{n,\mathcal Q}f\).
\end{remark}

\section{Numerical illustration}\label{sec:numerics}

The numerical experiments illustrate the two main conclusions of the analysis.
The first examines the competition between the filtered spectral tail and
quadrature-induced aliasing under polynomially exact quadrature, focusing on
the recovery--saturation transition in
Theorem~\ref{thm:threshold-dichotomy}.  The second considers
Marcinkiewicz--Zygmund stable point sets without prescribed polynomial
exactness and compares their Sloan errors with those obtained from spherical
designs.

For all computations, we index the real spherical harmonics of degree \(\ell\)
by \(m=-\ell,\ldots,\ell\), with \(m=0\) denoting the zonal harmonic.  We set
\(r=3\) and use the multiplier operator $\Top$ satisfying
$\widehat{\Top v}_{\ell,m}
  =\mu_\ell\widehat v_{\ell,m}$
  and
$\mu_\ell=0.55(1+\ell)^{-3/2}$.
This defines a Hilbert--Schmidt zonal integral operator with exact smoothing
order \(\beta=3/2\), and
\(
  |1-\mu_\ell|\ge0.45.
\)
Thus \(\Top\) satisfies Assumptions~\ref{ass:operator}
and~\ref{ass:exact-order}.  Each reported rate \(p\) is the negative slope of
a least-squares fit of \(\log e_n\) against \(\log n\) over the five largest
values of \(n\).

\subsection{Exact quadrature: recovery and saturation}

We restrict this experiment to the product-integration Nystr\"om method.
By the one-layer relation in Remark~\ref{rem:onelayer}, its Sloan error at
depth \(q\) coincides with the discrete Galerkin Sloan error at depth
\(q+1\).  A separate Galerkin experiment would therefore reproduce the same
curves with a shifted Sloan depth.
For the product-integration Nystr\"om method, set
$\gamma_q=(q+1)\beta$
and
$s=t-n$.
The two terms in Theorem~\ref{thm:Q} then have orders
$n^{-(r+\gamma_q-1)}$
and
$s^{-r}$.
For \(q=0\), the spectral-tail exponent is \(7/2\) and the recovery scale is
\(s\asymp n^{7/6}\).  After one Sloan step, these become \(5\) and
\(s\asymp n^{5/3}\), respectively.  For a datum \(f\),
\[
 u-\mathcal S_q(u_{n,\mathcal Q}^{N})
 =
 \underbrace{\Mop_{N,q}(\Id-\Pn)f}_{\text{filtered spectral tail}}
 +
 \underbrace{\Mop_{N,q}(\Pn f-\Ln f)}_{\text{filtered aliasing}}.
\]

We compare two quantities associated with this decomposition.  The exact
\(H^3(\Sph)\)-to-\(L^\infty(\Sph)\) norm of the filtered spectral tail is
\begin{equation}
 \tau_n^{(q)}
 :=
 \sup_{\|f\|_{H^3}\leq1}
 \|\Mop_{N,q}(\Id-\Pn)f\|_{L^\infty}
 =
 \left\{
 \sum_{\ell>n}
 \bigl(1+\ell(\ell+1)\bigr)^{-3}
 \left|\frac{\mu_\ell^{q+1}}{1-\mu_\ell}\right|^2
 \frac{2\ell+1}{4\pi}
 \right\}^{1/2}.
\end{equation}
The identity follows from \eqref{eq:sobolev} and the addition formula
\eqref{eq:addition}.  Numerically, the series is truncated at degree
\(50000\).
To measure aliasing, we retain the constant output mode and restrict the
input to the band \(t<\ell\leq L\).  Exactness through degree \(t\) removes
the constant-mode quadrature defect generated by all input harmonics of
degree at most \(t\).  We therefore define
\begin{equation}\label{equ:lowerbound}
\begin{aligned}
 \alpha_{n,t}^{(q,L)}
 &:=
 \sup_{\substack{\|f\|_{H^3}\leq1\\
       \widehat f_{\ell,m}=0\ \text{unless }t<\ell\leq L}}
 \left\|
  \ip{\Mop_{N,q}(\Pn f-\Ln f)}{Y_{0,1}}Y_{0,1}
 \right\|_{L^\infty}
 \\
 &=
 \frac1{4\pi}
 \left|\frac{\mu_0^{q+1}}{1-\mu_0}\right|
 \left\{
 \sum_{\ell=t+1}^{L}
 \bigl(1+\ell(\ell+1)\bigr)^{-3}
 \sum_{m=-\ell}^{\ell}
   \abs{\mathcal Q_N(Y_{\ell,m})}^2
 \right\}^{1/2}.
\end{aligned}
\end{equation}
Since the filtered spectral tail has no constant component and the
\(L^\infty\)-norm dominates the norm of the constant projection,
$ E_{n,q}^{N}(\mathcal Q_N)
 \geq \alpha_{n,t}^{(q,L)}$.
 Thus \(\alpha_{n,t}^{(q,L)}\) is a computable lower-bound certificate for the
full worst-case Sloan error, using only one output mode and a finite input
band.  The Sloan depth \(q\) enters only through the fixed factor
\(
  |{\mu_0^{q+1}}/{(1-\mu_0)}|,
\)
and therefore changes the magnitude of the certificate but not its
asymptotic rate.
For each prescribed degree \(t\), we use a well-conditioned spherical
\(t\)-design \cite{MR2763659} with \(N=(t+1)^2\) equal-weight nodes, so that
the associated rule is exact on \(\mathbb P_t\).  Figure~\ref{fig:num-dichotomy}
shows three representative cases.

\begin{figure}[htbp]
  \centering
  \includegraphics[width=\textwidth]
    {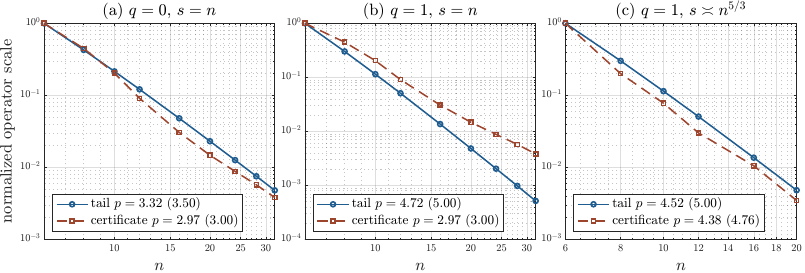}
  \caption{Filtered spectral tail and constant-mode aliasing certificate under exact
quadrature.  Parentheses indicate the theoretical tail exponent for solid
curves and the fitted \(s^{-3}\) slope along the displayed \((n,t)\)-schedule
for dashed curves.}
  \label{fig:num-dichotomy}
\end{figure}

In the first two panels, \(t=2n\), hence \(s=n\), and we take
\(q=0\) and \(q=1\), respectively, with
\(
 n=6,8,10,12,16,20,24,28,32.
\)
In the third panel, we test the \(q=1\) recovery scale
\(s\asymp n^{5/3}\).  Using the available design degrees, we take
\(
 (n,t)=(6,25),(8,40),(10,56),(12,77),(16,110),(20,160),
\)
for which
\(
 s=t-n=19,32,46,65,94,140.
\)
For $L$ in the constant-mode certificate \eqref{equ:lowerbound}, we set
\(
  L=t+\max\{20,\lceil t/2\rceil\},
\)
so that the input band contains at least \(20\) degrees and has width
proportional to \(t\) for large \(t\).  Each curve is normalized by its
value at \(n=6\); this changes only its vertical position and not the fitted
decay rate.

At minimal exactness, one Sloan step increases the observed tail rate from
\(3.32\) to \(4.72\), while the certificate rate remains \(2.97\), close to
the sharp aliasing rate \(n^{-3}\).  Theorem~\ref{thm:threshold-dichotomy}
gives the matching worst-case order \(n^{-3}\) for this quasi-optimal design
family, and the computation shows that the obstruction is already visible
in the constant mode.  At the recovery scale \(s\asymp n^{5/3}\), the
observed tail and certificate rates are \(4.52\) and \(4.38\), respectively,
consistent with the recovery predicted by the theorem.

\subsection{Inexact quadrature: Gram-corrected Sloan errors}

We next consider the axisymmetric datum \(f\) defined by $ \widehat f_{\ell,m}=0$ for $m\ne0$, $\widehat f_{\ell,0}=(1+\ell)^{-3.65}$ for $\ell\ge1$,
and with \(\widehat f_{0,0}=0.8\).  This datum belongs to \(H^3(\Sph)\) and
lies close to the regularity threshold: if
\(\widehat f_{\ell,0}\asymp\ell^{-a}\), then membership in \(H^3(\Sph)\)
requires \(a>7/2\), whereas here \(a=3.65\).

For \(n=4,6,8,10,12,16,20,24\), we compare four equal-weight point
families: a spherical \(2n\)-design, minimal-energy points
\cite{RakhmanovSaffZhou1994}, maximal-determinant points
\cite{WomersleySloan2001Interpolation}, and recursive zonal equal-area
points \cite{Leopardi2006EqualArea}.  Each set contains
\(N=(2n+1)^2\) points with weights \(4\pi/N\).

For each point set, we form the Gram-corrected least-squares approximation
from Section~\ref{sec:inexact} and compute the Nystr\"om Sloan error for
\(q=0,1,2\).  Specifically, we evaluate
$u-\mathcal S_q(\widetilde u_{n,\mathcal Q}^{N})
  =\Mop_{N,q}(\Id-\widetilde{\mathcal L}_{n,\mathcal Q})f$
and approximate its \(L^\infty\)-norm by the maximum absolute value over
\(6000\) Fibonacci points together with the two poles.  The spherical
harmonic expansion is truncated at degree \(320\) at both the quadrature
and test points. 

Table~\ref{tab:num-inexact} reports
\(\eta_n=\|G_n-I\|_2\), the condition number \(\kappa_2(G_n)\), and the
observed convergence rates.  The first two columns give the maximum values
of \(\eta_n\) and \(\kappa_2(G_n)\), respectively, over all tested \(n\).
The corresponding error curves are shown in
Figure~\ref{fig:num-inexact-sloan-errors}.

\begin{table}[H]
\caption{Marcinkiewicz--Zygmund diagnostics and observed Sloan error rates.}
\label{tab:num-inexact}
\centering
\begin{tabular}{lccccc}
\toprule
point family & \(\max\eta_n\) & \(\max\kappa_2(G_n)\)
 & \(q=0\) & \(q=1\) & \(q=2\)\\
\midrule
spherical \(2n\)-design & \(7.3\times10^{-12}\) & \(1.000\)
 & \(3.34\) & \(3.84\) & \(3.84\)\\
minimal energy & \(0.0292\) & \(1.058\)
 & \(3.26\) & \(4.03\) & \(4.05\)\\
maximal determinant & \(0.0669\) & \(1.142\)
 & \(3.30\) & \(4.37\) & \(4.46\)\\
equal area & \(0.0719\) & \(1.100\)
 & \(3.28\) & \(3.99\) & \(4.03\)\\
\midrule
spectral tail & --- & ---
 & \(3.31\) & \(4.67\) & \(6.03\)\\
\bottomrule
\end{tabular}
\end{table}

\begin{figure}[H]
  \centering
  \includegraphics[width=\textwidth]
    {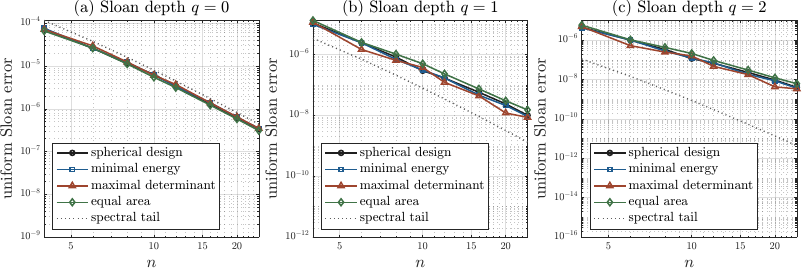}
  \caption{Sampled uniform errors for the Gram-corrected
  product-integration Nystr\"om method.}
  \label{fig:num-inexact-sloan-errors}
\end{figure}

For all three inexact families, \(\eta_n<0.072\) and
\(\kappa_2(G_n)<1.142\), while the resulting errors remain comparable to
those obtained with the spherical design.  Increasing the Sloan depth from
\(q=0\) to \(q=1\) improves the observed convergence rate for every point
family.  From \(q=1\) to \(q=2\), however, the filtered-tail rate rises from
\(4.67\) to \(6.03\), whereas the total-error rates change only slightly,
from \(3.84\)--\(4.37\) to \(3.84\)--\(4.46\).  The total error is therefore
dominated by the low-degree sampling defect rather than the spectral tail,
in agreement with the saturation mechanism predicted by the
Marcinkiewicz--Zygmund analysis in Section~\ref{sec:inexact}.

\section{Concluding remarks}

Sloan iteration and its saturation under quadrature have a common spectral
explanation.  Without quadrature, the error is confined to unresolved
frequencies and successive Sloan steps exploit the smoothing of the integral
operator.  Quadrature introduces a resolved-mode aliasing component, on which
further multiplier decay need not improve the asymptotic rate.  For
polynomially exact quadrature we quantified this competition by the sharp
two-sided estimate
\[
  E_{n,q}^{\star}
  \asymp
  n^{-(r+\gamma_\star-1)}+s_n^{-r},
\]
which yields the depth-dependent recovery--saturation threshold.  Under
Marcinkiewicz--Zygmund sampling, Gram correction preserves stability and the
sharp saturation branch, although recovery requires additional control of
the aliasing term.

The tail--aliasing mechanism is not specific to spherical harmonics.  For a
\(d\)-dimensional spectral system with the corresponding local
spectral-counting estimate, the tail exponent becomes
\(r+\gamma-d/2\).  This suggests analogous results for Laplace spectral
multipliers on compact manifolds, Fourier and orthogonal-polynomial
discretizations, with additional mode-coupling estimates for approximately
diagonal operators.  Increasing the Sloan depth therefore requires a
corresponding increase in discretization resolution if the additional
smoothing is to remain visible.

\small
\bibliographystyle{siamplain}
\bibliography{myref}

\end{document}